\tikzstyle{vertex}=[circle,draw=black,fill=black,inner sep=0,minimum size=3pt,text=white,font=\footnotesize]
\newtheorem{thm}{Theorem}[section]
\newtheorem{lemma}[thm]{Lemma}
\newtheorem{proposition}[thm]{Proposition}
\newtheorem{clm}[thm]{Claim}
\newtheorem*{lemma*}{Lemma}
\newtheorem*{proposition*}{Proposition}
\newtheorem*{theorem*}{Theorem}
\newcommand\ex{\ensuremath{\mathrm{ex}}}
\newcommand\cH{{\mathcal H}}
\newcommand\cN{{\mathcal N}}
\newcommand{\ignore}[1]{}
\title{Generalized Tur\'an results for matchings}
\author{Dániel Gerbner\\ \small HUN-REN Alfr\'ed R\'enyi Institute of Mathematics\\
\small \texttt{gerbner.daniel@renyi.hu}}
\date{}
\begin{document}

\maketitle

\begin{abstract} 

Given graphs $H$ and $F$, the \textit{generalized Tur\'an number} $\mathrm{ex}(n,H,F)$ is the largest number of copies of $H$ in $n$-vertex $F$-free graphs. We study the case when either $H$ or $F$ is a matching. We obtain several asymptotic and exact results.
\end{abstract}

\section{Introduction}

A fundamental result in extremal graph theory is the theorem of Tur\'an \cite{T}, that determines the largest number of edges in $n$-vertex $K_k$-free graphs. The analogous problems with other forbidden graphs have been widely studied. Recently, a more general problem has attracted a lot of attention under the name \emph{generalized Tur\'an problem}. Given graphs $H$ and $G$, let $\cN(H,G)$ denote the number of copies of $H$ in $G$. We let $\ex(n,H,F)$ denote the largest $\cN(H,G)$ where $G$ is an $n$-vertex $F$-free graph. After several sporadic results, the systematic study of this function has been initiated by Alon and Shikhelman \cite{AS}.

In this paper we study this function in the case $H$ or $F$ is a matching $M_k$, consisting of $k$ independent edges.
Let us highlight some of our results. Gerbner \cite{ger} showed that if $F$ is not a forest, then $\ex(n,M_t,F)=(1+o(1))\ex(n,F)^t/t!$. We extend this to trees, and characterize when forests have this property. We obtain exact results when stars or paths are forbidden. 

In the case of forbidden matchings, the order of magnitude is known, and it is not hard to see that the extremal graph can be chosen at most $c(s)$ ways for some constant $c(s)$ that depends on $s$ but not on $n$. We determine the unique extremal graph in the case the vertex cover number of $H$ (the smallest number of vertices incident to each edge) is at most $s-1$ and $n$ is large enough.

Section 2 deals with the case of counting matchings and Section 3 deals with the case of forbidding matchings.

\section{Counting matchings}

\begin{proposition}\label{aszi}
        For any positive integer $t$ and any tree $F$, we have $\ex(n,M_t,F)=(1+o(1))\ex(n,F)^t/t!$.
\end{proposition}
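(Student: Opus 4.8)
The upper bound is immediate and uses nothing special about trees. For any $F$-free graph $G$ on $n$ vertices, every copy of $M_t$ is a choice of $t$ of its edges, so $\cN(M_t,G)\le \binom{|E(G)|}{t}\le \binom{\ex(n,F)}{t}=(1+o(1))\ex(n,F)^t/t!$, the last step using that $\ex(n,F)\to\infty$ (with $t$ fixed) for any tree with at least two edges; the single-edge tree $F=K_2$ is trivial, since then both sides vanish. Hence $\ex(n,M_t,F)\le(1+o(1))\ex(n,F)^t/t!$, and the entire content of the proposition lies in the matching lower bound.

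For the lower bound my plan is to exhibit an $F$-free graph $G$ with $(1-o(1))\ex(n,F)$ edges in which the number of copies of $M_t$ is $(1-o(1))|E(G)|^t/t!$; the latter will hold as soon as $G$ has bounded maximum degree. First I would record two classical facts about trees: that $\ex(n,F)=\Theta(n)$ (a graph with minimum degree at least $|E(F)|$ contains $F$ greedily, while a disjoint union of cliques on $|E(F)|$ vertices is $F$-free), and that $\ex(\cdot,F)$ is superadditive, since $F$ is connected and so a disjoint union of $F$-free graphs is $F$-free. By Fekete's lemma the limit $c:=\lim_n \ex(n,F)/n=\sup_n \ex(n,F)/n$ then exists and is finite. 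Given $\delta>0$ I would fix an $n_0$ and an extremal $F$-free graph $G_0$ on $n_0$ vertices with $|E(G_0)|\ge(c-\delta)n_0$; as $n_0$ is a constant, $\Delta(G_0)\le n_0-1$ is bounded. Taking $G$ to be $\lfloor n/n_0\rfloor$ vertex-disjoint copies of $G_0$ (padded with isolated vertices) gives an $F$-free graph with $|E(G)|\ge(1-o(1))(c-\delta)n$ and $\Delta(G)=\Delta(G_0)=O(1)$.

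I would then lower-bound $\cN(M_t,G)$ by a greedy count: building an ordered matching edge by edge, once $j$ edges are chosen they cover $2j$ vertices meeting at most $2j\Delta(G)$ edges, so at least $|E(G)|-2j\Delta(G)$ edges remain available; hence the number of ordered size-$t$ matchings is at least $\prod_{j=0}^{t-1}(|E(G)|-2j\Delta(G))\ge(|E(G)|-O(1))^t$, which yields $\cN(M_t,G)\ge(1-o(1))|E(G)|^t/t!$. Dividing by $\ex(n,F)^t/t!=(1+o(1))(cn)^t/t!$ and letting $n\to\infty$ gives $\liminf_n \ex(n,M_t,F)/(\ex(n,F)^t/t!)\ge((c-\delta)/c)^t$, and letting $\delta\to0$ completes the lower bound.

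The step I expect to be the crux is the insistence on bounded maximum degree, which is precisely why one cannot just count matchings in an arbitrary extremal graph: for $F=P_4$ the star $K_{1,n-1}$ is $F$-free with nearly $\ex(n,P_4)$ edges yet contains no matching of size two, so the extremal edge count by itself controls nothing about $M_t$-counts. The disjoint-copies construction circumvents this by trading one large block for many small blocks of the same edge density but constant degree, and Fekete's lemma is exactly what guarantees a constant-size block of density approaching $c$ exists. The only care needed is the order of limits: the bound $((c-\delta)/c)^t$ must be secured for each fixed $\delta$ as $n\to\infty$, and only afterwards is $\delta$ sent to $0$.
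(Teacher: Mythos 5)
Your proof is correct and takes essentially the same route as the paper's: the upper bound by counting ordered $t$-tuples of independent edges, and the lower bound by taking $\lfloor n/n_0\rfloor$ disjoint copies of a constant-size near-extremal $F$-free graph (legitimate exactly because a tree is connected) and exploiting its bounded maximum degree in a greedy matching count. The only cosmetic differences are that you justify $\ex(n,F)=(1+o(1))cn$ explicitly via superadditivity and Fekete's lemma, where the paper works tersely with $c=\limsup\ex(n,F)/n$, and that you count matchings greedily rather than by the paper's induction on $i$; these are the same ideas in different clothing.
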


\begin{proof}
The upper bound is obvious, in an $n$-vertex $F$-free graph $G$ we can count copies of $M_t$ by picking $t$ independent edges, each at most $\ex(n,F)$ ways. We count each copy of $M_t$ exactly $t!$ times.

Let $c:=\limsup \frac{\ex(n,F)}{n}$. Let $\varepsilon>0$. Then there is $m$ such that $\ex(m,F)\ge (c-\varepsilon)m$, let $G_m$ be an $m$-vertex
$F$-free graph with $\ex(m,F)$ edges. For every sufficiently large $n$, we have that $\lfloor n/m\rfloor$ copies of $G_m$ is $F$-free with at least 
$(c-\varepsilon)m\lfloor n/m\rfloor\ge (c-\varepsilon)n-cm\ge (c-2\varepsilon)n$ edges and maximum degree at most $m-1$. Then we claim that $\cN(M_i,G_m)\ge(c-2\varepsilon)^in^i/i!$.
We prove this by induction on $i$. For $M_1$ it is obvious, for $M_{i+1}$ we can count the copies by picking $M_{i}$, and then an edge from another component. This can be done at least $(c-\varepsilon)m(\lfloor n/m\rfloor-i)(c-2\varepsilon)^in^i/i!\ge (c-2i\varepsilon)^in^i(c-2\varepsilon)n$ ways and we count each $M_{i+1}$ exactly $(i+1)$ times. We obtained that for any $\varepsilon >0$, for every sufficiently large $n$, $\ex(n,M_t,F)\ge (c-2\varepsilon)^tn^t/t!$ while $\ex(n,F)\le cn$. This completes the proof.    
\end{proof}

The above argument also works for forests where one of the components $F_1$ has that $\ex(n,F)=(1+o(1))\ex(n,F_1)$. We show that 
the above statement does not hold for other forests. First we need a lemma.

\begin{lemma}\label{lemi} Let $t>1$. For every $\alpha>0$ and $c>0$ there is $\beta>0$ such that the following holds.
   Let $G$ be an $n$-vertex graph with at most $cn$ edges and minimum degree degree at least $\beta n$, then $\cN(M_t,G)\le (1-\alpha)(cn)^t/t!$.
\end{lemma}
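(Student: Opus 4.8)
The plan is to exploit a single vertex of high degree. The hypothesis provides a vertex $v$ with $\deg(v) = d \ge \beta n$ (under a maximum-degree reading this is immediate; under a minimum-degree reading every vertex qualifies, and the argument below needs only one). Write $e = |E(G)| \le cn$. Since all edges at $v$ share the endpoint $v$, any copy of $M_t$ uses at most one of them, so I split the count of $M_t$ according to whether a copy meets $v$.

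Copies of $M_t$ avoiding $v$ are matchings of size $t$ in $G - v$, a graph with $e - d$ edges, so there are at most $\binom{e-d}{t}$ of them. A copy meeting $v$ uses exactly one edge $vu$ at $v$, chosen in at most $d$ ways, and its remaining $t-1$ edges form a matching in $G - v$; hence there are at most $d\binom{e-d}{t-1}$ such copies. Combining and using $\binom{m}{k}\le m^k/k!$,
\[
\cN(M_t,G)\le \binom{e-d}{t}+d\binom{e-d}{t-1}\le \frac{(e-d)^{t-1}\bigl(e+(t-1)d\bigr)}{t!}.
\]

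It then remains to show the right-hand side is at most $(1-\alpha)(cn)^t/t!$ for a suitable $\beta$. Treating $g(e,d)=(e-d)^{t-1}\bigl(e+(t-1)d\bigr)$ as a function of the real parameters $e\in[d,cn]$ and $d\in[\beta n,cn]$, a short computation shows $g$ is increasing in $e$ and decreasing in $d$ (its $d$-derivative is $-t(t-1)d(e-d)^{t-2}\le 0$). Hence $g$ is maximized at the corner $e=cn$, $d=\beta n$, giving $g\le n^t(c-\beta)^{t-1}\bigl(c+(t-1)\beta\bigr)$. So it suffices to choose $\beta$ with
\[
(c-\beta)^{t-1}\bigl(c+(t-1)\beta\bigr)\le (1-\alpha)c^t.
\]
The left-hand side equals $c^t$ at $\beta=0$ and is strictly decreasing in $\beta$ on $(0,c]$ (the same derivative computation, now in $\beta$), so it drops below $(1-\alpha)c^t$ for all $\beta$ past some threshold $\beta_0=\beta_0(\alpha,c,t)>0$; any such $\beta$ works.

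The only real subtlety, and the step I would be most careful about, is that one cannot bound the ``copies meeting $v$'' term by replacing $d$ with its crude upper bound $cn$: that loses a factor of order $t$ and destroys the estimate. The gain comes precisely from the coupling $e-d\le (c-\beta)n$, i.e.\ that a large degree $d$ forces few edges to remain off $v$, and the monotonicity argument packages this coupling. I would also note that the first-order term in $\beta$ cancels in the expansion $(c-\beta)^{t-1}\bigl(c+(t-1)\beta\bigr)=c^t-\tfrac{t(t-1)}{2}c^{t-2}\beta^2+O(\beta^3)$, so the threshold behaves like $\beta_0\asymp c\sqrt{\alpha}$; this does not affect the qualitative statement but explains why $\beta$ must be taken of order $\sqrt{\alpha}$ rather than linear in $\alpha$.
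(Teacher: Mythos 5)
Your proof is correct, and its core is the same as the paper's: fix a single vertex $v$ of degree $d\ge\beta n$, split the $t$-matchings according to whether they use an edge at $v$, and exploit that a matching through $v$ must avoid all $d$ edges at $v$, while a matching avoiding $v$ lives in a graph with only $e-d$ edges. (The paper does this count with ordered $t$-tuples of disjoint edges rather than binomial coefficients, which only costs a factor of order $t(t-1)$ in the saving.)

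Where you genuinely diverge is the endgame, and there your version is the right one. The paper's algebra gives $t!\,\cN(M_t,G)\le |E(G)|^t-\beta^2n^2|E(G)|^{t-2}\le\bigl(1-\beta^2/c^2\bigr)(cn)^t$ (it prints $\beta n^2$ where $\beta^2n^2$ is meant), and then closes with ``if $\beta$ is sufficiently small'' --- which is backwards: exactly as your monotonicity computation and the expansion $(c-\beta)^{t-1}\bigl(c+(t-1)\beta\bigr)=c^t-\tfrac{t(t-1)}{2}c^{t-2}\beta^2+O(\beta^3)$ show, under the stated quantifier order (for every $\alpha$ there is $\beta$) one must take $\beta$ \emph{large}, of order $c\sqrt{\alpha}$. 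You are also right to resolve the hypothesis as ``some vertex has degree at least $\beta n$'': that is how the lemma is invoked later in the paper (a vertex of degree $\ge\beta n$ is produced, with $\beta$ already fixed small), and that application really needs the reversed quantifiers --- for every $\beta,c$ there is some saving $\alpha>0$ --- which is precisely what both your explicit bound and the paper's computation deliver, in the form $\alpha\asymp(\beta/c)^2$. So your write-up not only proves the literal statement but makes explicit the dependence that the paper's phrasing garbles.
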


\begin{proof}
    We can count the copies of $M_t$ by picking an edge $uv$ and then $t-1$ other edges, independent of $uv$ and from each other. Each copy of $M_t$ is counted $t!$ times this way. The first edge can be picked at least $\beta n$ ways such that $u$ has degree at least $\beta n$. In that case the other edges are not among the edges incident to $u$, thus there are at most $|E(G)|-\beta n$ of them. Therefore, 
    $t!\cN(M_t,G)\le \beta n(|E(G)|-\beta n)^{t-1}+(|E(G)|-\beta n)|E(G)|^{t-1}\le$ $\beta n(|E(G)-\beta n)|E(G)|^{t-2}+(|E(G)|-\beta n)|E(G)|^{t-1}=|E(G)|^t-\beta n^2|E(G)|^{t-2}\le (cn)^t(1-\alpha)$ if $\beta$ is sufficiently small.
\end{proof}

\begin{proposition}
    Let $F$ be a forest with components $F_1,\dots,F_k$ and assume that there exists $\alpha>0$ such that for each $i\le k$, $\ex(n,F_i)<(1-\alpha)\ex(n,F)$. Then there exists $\alpha'>0$ such that $\ex(n,M_t,F)<(1-\alpha')\ex(n,F)^t/t!$.
\end{proposition}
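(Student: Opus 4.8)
The plan is to prove the upper bound for an arbitrary $n$-vertex $F$-free graph $G$ with $e:=|E(G)|$ edges, by a dichotomy on the maximum degree $\Delta(G)$ with a threshold $\beta n$, where $\beta>0$ is small and fixed in advance. Throughout I use the standard fact that $\ex(n,F)=\Theta(n)$ for a forest with at least one edge, fixing constants $0<c'\le c$ with $c'n\le\ex(n,F)\le cn$ for all large $n$. (I assume $t\ge 2$, matching the hypothesis $t>1$ of Lemma~\ref{lemi}; for $t=1$ the claim reduces to $\ex(n,F)<(1-\alpha')\ex(n,F)$, which is false.)

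\emph{Case 1: $G$ has a vertex $w$ with $\deg(w)\ge\beta n$.} Here I would count ordered $t$-matchings exactly as in the proof of Lemma~\ref{lemi}, using the single vertex $w$ in place of the minimum-degree hypothesis (that proof really only uses one large-degree vertex). Splitting on whether the first edge is incident to $w$ gives
\[
t!\,\cN(M_t,G)\le \deg(w)\,(e-\deg(w))^{t-1}+(e-\deg(w))\,e^{t-1}\le e^{t}-\deg(w)^2\,e^{t-2}.
\]
Since $\deg(w)\ge\beta n$ and $e\le cn$, the right-hand side is at most $(1-\beta^2/c^2)\,e^{t}\le(1-\beta^2/c^2)\,\ex(n,F)^{t}$, which is the desired gap with $\alpha_1:=\beta^2/c^2$.

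\emph{Case 2: $\Delta(G)<\beta n$.} This is where the hypothesis on the components is used. I would let $j$ be the least index for which $G$ does not contain the vertex-disjoint union $F_1\cup\dots\cup F_j$; this is well defined since $G$ is $F$-free. By minimality $G$ contains a disjoint copy of $F_1\cup\dots\cup F_{j-1}$ on a vertex set $U$ (empty if $j=1$) with $|U|\le|V(F)|=:f$, and $G-U$ must be $F_j$-free, since a copy of $F_j$ in $G-U$ would complete $F_1\cup\dots\cup F_j$ in $G$. Deleting $U$ removes at most $f\Delta(G)<f\beta n$ edges, so
\[
e\le \ex(n,F_j)+f\beta n<(1-\alpha)\ex(n,F)+f\beta n .
\]
Choosing $\beta:=\alpha c'/(2f)$ forces $f\beta n\le\tfrac{\alpha}{2}c'n\le\tfrac{\alpha}{2}\ex(n,F)$, hence $e\le(1-\alpha/2)\ex(n,F)$, and the trivial bound $\cN(M_t,G)\le e^{t}/t!$ gives $\cN(M_t,G)\le(1-\alpha/2)^{t}\,\ex(n,F)^{t}/t!$, a gap with $\alpha_2:=1-(1-\alpha/2)^{t}$.

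Taking $\alpha'=\tfrac12\min(\alpha_1,\alpha_2)>0$ (with this $\beta$, which also pins down $\alpha_1$) then covers both cases for all large $n$. I expect the main obstacle to be Case~2, namely turning ``no dominant component'' into a genuine edge deficit: the key point is that the greedy deletion removes only $O(\beta n)=o(\ex(n,F))$ edges precisely because we are in the small-maximum-degree regime, so the inequality $\ex(n,F_j)<(1-\alpha)\ex(n,F)$ survives the deletion. The auxiliary fact $\ex(n,F)=\Theta(n)$, needed to absorb the $f\beta n$ error term, is routine—a perfect matching is $F_j$-free whenever $F_j$ has a vertex of degree at least two, and otherwise $F=M_k$ with $\ex(n,M_k)=\Theta(n)$.
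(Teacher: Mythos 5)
Your proof is correct, but it takes a genuinely different route from the paper's. The paper works with an extremal graph, greedily packs vertex-disjoint copies of $F_1$, and splits into two cases: if $|V(F)|+1$ disjoint copies are found, it inducts on the number of components $k$ (replacing $F$ by $F'=F\setminus F_1$); otherwise the last deleted graph $G_i$ is $F_1$-free and a dichotomy on the number of edges meeting the deleted set $U$ either yields few extra matchings or a vertex of degree $\ge\beta n$, at which point Lemma \ref{lemi} is invoked. Your argument replaces both the packing step and the induction by a single observation: taking the least $j$ such that $G$ contains no vertex-disjoint $F_1\cup\dots\cup F_j$, deleting the $\le |V(F)|$ vertices of a disjoint copy of $F_1\cup\dots\cup F_{j-1}$ leaves an $F_j$-free graph, and in the low-maximum-degree regime this deletion costs only $O(\beta n)$ edges, so the hypothesis $\ex(n,F_j)<(1-\alpha)\ex(n,F)$ directly forces an edge deficit. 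This buys several things: it avoids the induction on $k$ entirely (in particular, it sidesteps having to verify that the hypothesis of the proposition is inherited by $F'$, a point the paper's induction glosses over); it handles all components symmetrically rather than privileging $F_1$; and your Case 1 is a corrected, self-contained version of Lemma \ref{lemi} with the quantifiers in the order actually needed (deficiency $\alpha_1$ as a function of $\beta$ and $c$, rather than ``for every $\alpha$ there is $\beta$''), and using only a single vertex of degree $\ge\beta n$ --- which is exactly the situation in which the paper applies its lemma, even though the lemma as stated assumes minimum degree $\ge\beta n$. What the paper's formulation buys in exchange is a reusable standalone lemma (Lemma \ref{lemi} is also invoked in the proof of Theorem \ref{pati}). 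Your remark that $t\ge 2$ must be implicitly assumed (the statement is false for $t=1$) is a correct and worthwhile observation, consistent with the hypothesis $t>1$ of Lemma \ref{lemi}; the only imprecision is the passing claim that $\ex(n,F)=\Theta(n)$ for every forest with at least one edge (false for $F=K_2$), but under the proposition's hypothesis at least two components of $F$ contain edges, so $F\supseteq M_2$ and the star $K_{1,n-1}$ already gives the linear lower bound your argument needs.
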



\begin{proof}
    Let $G$ be an $n$-vertex $F$-free graph with $\cN(M_t,G)=\ex(n,M_t,F)$. If $G$ is $F_1$-free, we are done, since $\cN(M_t,G)\le |E(G)|^t/t!\le \ex(n,F_1)^t/t!<(1-\alpha)^t\ex(n,F)^t/t!<(1-\alpha)\ex(n,F)^t/t!$. Consider a copy of $F_1$ in $G$ and let $U_1$ be the set of its vertices. Let $G_1$ be obtained from $G$ by deleting $U_1$. If there is a copy of $F_1$ in $G_1$, let $U_2$ be the set of its vertices and $G_2$ be obtained from $G_1$ by deleting $U_2$. We repeat this, with $U_i$ being the vertex set of a copy of $F_1$ in $G_{i-1}$ (if exists) and $G_i$ obtained from $G_{i-1}$ by deleting $U_{i}$. Let $m=|V(F)|$. 

If $G_{m+1}$ exists, we found $m+1$ vertex-disjoint copies of $F_1$. We apply induction on $k$. Assume first that $k=2$. A copy of $F_2$ in $G$ would have to intersect each copy of $F_1$, which is impossible, thus $G$ is $F_2$-free and we are done. Assume that $k\ge 3$ and let $F'$ be the graph we obtain from $F$ by deleting $F_1$. Then $G_1$ is $F'$-free, thus has at most $(1-\alpha'')\ex(n,F')^t/t!\le (1-\alpha'')\ex(n,F)^t/t!$ edges for some $\alpha''>0$, by induction. This completes the proof.

If $G_{m+1}$ does not exist, let $i$ be largest integer such that $G_i$ exists. Then $G_i$ is $F_1$-free, thus has at most $\ex(n-i|V(F_1)|,F_1)<(1-\alpha)\ex(n,F)$ edges. Let $U=\cup_{i=1}^i U_i$. The copies of $M_t$ not inside $G_i$ each contain an edge incident to at least one of the vertices in $U$. If there are less than $\beta |U| n$ such edges for some small enough $\beta$, then they participate in an additional $\beta |U| n\ex(n,F)^{t-1}<(\alpha-\alpha')\ex(n,F)^t/t!$ copies of $M_t$ for some sufficiently small $\alpha'>0$, completing the proof. If there are at least $\beta |U| n$ such edges, then there is a vertex of degree at least $\beta n$, thus Lemma \ref{lemi} completes the proof.
\end{proof}

Next we show two simple examples, where we determine $\ex(n,M_t,F)$ for every $n$. The \textit{friendship graph} $F_n$ consists of a vertex of degree $n-1$ and a largest possible matching on the other $n-1$ vertices.

\begin{proposition}
    \textbf{(i)} $\ex(n,M_t,K_2\cup P_3)=\max\{\cN(M_t,K_4),\cN(M_t,M_{\lfloor n/2\rfloor})$.
    
\textbf{(ii)} $\ex(n,M_t,2P_3)=\max\{\cN(M_2,K_5\cup M_{\lfloor (n-5)/2\rfloor}),\cN(M_2,K_4\cup M_{\lfloor (n-4)/2\rfloor}),\cN(M_2,F_n)\}$. 
\end{proposition}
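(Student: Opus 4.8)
I will treat the two parts separately. In both, the lower bounds are immediate: each listed graph is readily checked to avoid the forbidden subgraph ($K_4$ and $K_5$ are too small to contain $K_2\cup P_3$ or $2P_3$, a matching contains no $P_3$, and in $F_n$ every $P_3$ uses the centre), and adding isolated vertices does not change $\cN(M_t,\cdot)$, so each candidate realises its value on $n$ vertices. Hence the content is the matching \emph{upper} bound. I will assume $t\ge 2$ throughout, since for $t=1$ the star $K_{1,n-1}$ is $K_2\cup P_3$-free with $n-1$ edges; for counting $M_t$ with $t\ge 2$ this star-type (matching-number-one) case contributes nothing. For (ii) I count $M_t$, reading the three quantities on the right with $M_t$ in place of $M_2$.

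For (i) the plan is to classify $K_2\cup P_3$-free graphs $G$ by their matching number $\nu(G)$. If $G$ has two independent edges $e_1=uv,\ e_2=xy$, then no endpoint of $e_1$ or $e_2$ may have a neighbour $w$ outside $V(e_1)\cup V(e_2)$, since $w-u-v$ would be a $P_3$ disjoint from $e_2$. Pushing one step further, if $\nu(G)\ge 3$ then for an edge $uv$ of a maximum matching any third neighbour $w$ of $u$ together with one of the $\ge 2$ remaining matching edges disjoint from $\{u,v,w\}$ yields a forbidden copy; so every vertex met by a maximum matching has degree $1$, and by maximality $G$ is a matching. Thus $\nu(G)\ge 3$ gives $\cN(M_t,G)\le\binom{\lfloor n/2\rfloor}{t}=\cN(M_t,M_{\lfloor n/2\rfloor})$; $\nu(G)=2$ forces all edges inside the four vertices $V(e_1)\cup V(e_2)$, so $G\subseteq K_4$ and $\cN(M_t,G)\le\cN(M_t,K_4)$; and $\nu(G)\le 1$ gives $\cN(M_t,G)=0$. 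Taking the maximum finishes (i); the only mild point is isolating this case split.

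For (ii) I first reduce to one interesting component: two $P_3$'s in different components would be disjoint, so at most one component $C$ of $G$ contains a $P_3$ and every other component is a single vertex or edge. Thus $G=C\sqcup M$ with $M$ a matching and $C$ connected $2P_3$-free. The heart of the proof is a structure theorem for $C$: either $|V(C)|\le 5$, or $C$ is \emph{friendship-like}, meaning some vertex $c$ has $C-c$ of maximum degree $\le 1$ (equivalently every $P_3$ passes through $c$). To prove it I would take a maximum-degree vertex $c$; if $C-c$ is a matching we are in the friendship-like case, so assume a $P_3$ $\,p-y-q\,$ avoids $c$ and set $N=\{p,y,q\}$. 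Then no vertex may have two neighbours outside $N$ (else it centres a $P_3$ disjoint from $p-y-q$), so $C-N$ has maximum degree $\le 1$ and all degrees are bounded. The main obstacle is the finite analysis in this bounded regime on $\ge 6$ vertices: two ``arms'' hanging off distinct vertices of $N$ again produce two disjoint $P_3$'s, and a careful case check forces $|V(C)|\le 5$.

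Granting the structure theorem, I optimise $\cN(M_t,\cdot)$ in each class. A pure matching is dominated by $K_4\cup M_{\lfloor(n-4)/2\rfloor}$, which already contains a (near-)perfect matching. If $|V(C)|\le 5$ then $C\subseteq K_5$; adding edges to the $\le 5$-vertex core only increases the count and the rest is best taken as a maximum matching, so the optimum over core sizes reduces to $K_5\cup M_{\lfloor(n-5)/2\rfloor}$ and $K_4\cup M_{\lfloor(n-4)/2\rfloor}$ (smaller cores are dominated on parity grounds). If $C$ is friendship-like, then joining the centre to every other vertex and absorbing the leftover edges of $M$ as further petals keeps $G$ $2P_3$-free while only increasing $\cN(M_t,\cdot)$, so the optimum is $F_n$. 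The claimed formula is the maximum of the three, and comparing the explicit binomial expressions shows all three are needed: the clique graphs win for small $n$ or for $t$ near $\lfloor n/2\rfloor$ (where $K_4\cup M$ has the larger matching number for even $n$), while $F_n$ wins for large $n$. The principal difficulty is the structure theorem for connected $2P_3$-free graphs; the subsequent optimisation is a finite comparison.
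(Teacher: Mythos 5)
Your part (i) is correct and is essentially the paper's own argument: once two independent edges $uv,xy$ are present, any edge leaving $\{u,v,x,y\}$ creates a $P_3$ centred at an endpoint of one matching edge and disjoint from the other one, so either $G$ has no $P_3$ at all (and is a matching) or all edges lie inside those four vertices. Your split on $\nu(G)$ is the same reasoning in slightly different packaging.

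Part (ii), however, has a genuine gap: the structure theorem your whole argument rests on is false. Consider the \emph{net}: a triangle $v_1v_2v_3$ with a pendant edge $v_iu_i$ at each $v_i$. It is connected, has $6$ vertices, and is $2P_3$-free, since every $P_3$ in it contains at least two of $v_1,v_2,v_3$, so two vertex-disjoint copies would need four distinct vertices inside $\{v_1,v_2,v_3\}$. Yet it is not friendship-like: deleting any single vertex leaves a $P_3$ (deleting $v_1$ leaves $u_2$-$v_2$-$v_3$; deleting a pendant $u_i$ leaves the triangle). This also pinpoints where your sketch breaks: take $c=v_1$ and the avoiding $P_3$ $N=\{u_2,v_2,v_3\}$; the arms hanging off $v_2$ (via $v_1$, then $u_1$) and off $v_3$ (to $u_3$) sit at distinct vertices of $N$, but every $P_3$ using one arm still meets every $P_3$ using the other, so ``two arms produce two disjoint $P_3$'s'' is simply not true, and no case check can force $|V(C)|\le 5$. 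Since the net plus a matching on the remaining $n-6$ vertices is $2P_3$-free, your classification misses legitimate graphs, and your final optimisation never compares them to the three candidates; the upper bound is therefore not established. (The proposition itself is safe: the net has $6$ edges, $6$ copies of $M_2$ and $1$ copy of $M_3$, all dominated by $K_4\cup K_2$.)

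The paper avoids claiming any clean dichotomy of this kind. It picks a maximum-degree vertex $v$ and splits on $d(v)$: for $d(v)\ge 5$ every $P_3$ passes through $v$, giving the friendship case; for $d(v)=4$ either the same happens or all edges lie inside $\{v\}\cup N(v)$ apart from a matching; and for $d(v)=3$ --- exactly the regime containing the net --- it takes the set $U$ of vertices at distance at most $2$ from $v$, shows $|U|\le 7$ and that the graph outside $U$ is a matching, and then runs a local \emph{replacement} argument: it bounds the number of edges, copies of $M_2$ and copies of $M_3$ that can live on $U$, and checks that replacing $G[U]$ by $K_4\cup K_2$ (or $K_5\cup M_1$) increases all of these, hence increases $\cN(M_t,G)$, since each copy of $M_t$ meets $U$ in a copy of some $M_i$ with $i\le 3$. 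To salvage your route you would have to weaken the structure theorem to admit a third outcome (a bounded, at most $6$- or $7$-vertex core such as the net, not necessarily friendship-like) and then add precisely this comparison of the core against $K_4\cup K_2$ and $K_5\cup M_1$ --- which is in effect the paper's proof.
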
 

Note that in \textbf{(i)} it is clear that $K_4$ is better than the other construction if and only if $t=2$ and $n=4,5$. In \textbf{(ii)}, the situation is more complicated, but clearly $K_5\cup M_{\lfloor (n-5)/2\rfloor})$ is the best if $n$ is sufficiently large.

\begin{proof}
Let $G$ be an $n$-vertex $K_2\cup P_3$-free graph. Assume that $G$ contains two independent edges $uv$ and $xy$. Then each $P_3$ is inside these four vertices. If there is no $P_3$ in $G$, we are done, thus let us assume that $ux\in E(G)$. Then there are no edges outside these vertices, hence we are done with the proof of \textbf{(i)}.

Let $G$ be an $n$-vertex $2P_3$-free graph and $v$ be a vertex of largest degree in $G$. If $d(v)\ge 5$, then there is no $P_3$ that does not contain $v$, thus edges not incident to $v$ form a matching and $G$ is a subgraph of the friendship graph. If $d(v)= 4$, then we can have copies of $P_3$ among the neighbors of $v$. Then the graph is a subgraph of $K_5$ plus a matching. If $d(v)=3$, then we can have copies of $P_3$ that avoid $v$ but contain two of the neighbors of $v$. Each of the three neighbors $x,y,z$ of $v$ can have at most one neighbor in $V(G)\setminus \{v,x,y,z\}$. Let $U$ denote the vertices that are of distance at most 2 from $v$, then $|U|\le 7$. The rest of the graph $G[V(G)\setminus U]$ forms a matching. If $|U|\le 5$, then we can replace $U$ by a clique to obtain a subgraph of the claimed extremal graphs. If $|U|=6$, then, say, $x$ has a neighbor $x'$ and $y$ has a neighbor $y'$ in addition to $v,x,y,z$. Furthermore, $x'$ may be adjacent to $z$. In any case, $z$ cannot be adjacent to $x$ or $y$, as for example $xz$ with $xx'$ would create a $P_3$ disjoint from $vyy'$. Similarly, we cannot have both $x'z$ and $xy$ in the graph, as then $y'yx$ and $x'zv$ would form $2P_3$. 
It is easy to check that there are at most 6 edges, at most 7 copies of $M_2$ and at most 2 copies of $M_3$ inside $U$. If we replace it by $K_4\cup K_2$, each of these values increase. As each copy of $M_t$ intersects $U$ in 0,1,2 or 3 independent edges, the number of copies of $M_t$ increases and we are done.

 Finally, if $|U|=7$, then similarly to the previous case, we have edges $xx'$, $yy'$, $zz'$ and we cannot have edges between $x,y,z$. Then we have 6 edges, 9 copies of $M_2$ and 3 copies of $M_3$ inside $U$, it is better to replace them by $K_5\cup M_1$.

 If $d(v)\le 2$, then each component is a path or a cycle, and there is at most one component that is not a single edge. Moreover, that component has at most 5 vertices, thus a subgraph of $K_5$ and $G$ is a subgraph of $K_5\cup M_{\lfloor (n-5)/2\rfloor})$, completing the proof.
\end{proof}

In light of Proposition \ref{aszi}, we are interested in exact results. In the case the forbidden graph has chromatic number at least 3, the following notion helps. We say that a graph $H$ is \textit{$F$-Tur\'an-stable} if any $n$-vertex $F$-free graph $G$ with $\ex(n,H,F)-o(n^{|V(H)|})$ copies of $H$ can be turned into $T(n,\chi(F)-1)$ by adding and removing $o(n^2)$ edges. Here $T(n,k)$ is the \textit{Tur\'an graph}, a complete $k$-partite graph with each part of order $\lfloor n/k\rfloor$ or $\lceil n/k\rceil$. The Erd\H os-Simonovits stability theorem \cite{erd1,erd2,simi} states that $M_1$ is $F$-Tur\'an-stable for every $F$ with chromatic number at least 3. Gerbner \cite{ger2} observed that the vertex-disjoint union of $F$-Tur\'an-stable graphs is also $F$-Tur\'an-stable, thus $M_t$ has this property for every $F$ with chromatic number at least 3. Gerbner \cite{ger2} also showed that this implies that $\ex(n,M_t,F)=\cH(M_t,T(n,\chi(F)-1))$ for every $F$ with a color-critical edge. Some other exact results that we do not state here are implied by \cite{ger3}.

In the case the forbidden graph is bipartite, we do not have many exact results even if $t=1$. Let us deal with some forbidden graphs where the case $t=1$ is known.
We say that a graph $G$ is \textit{almost $d$-regular} if either every vertex has degree $d$ or all but one of the vertices have degree $d$ and the remaining vertex has degree $d-1$. It is easy to see that $\ex(n,S_r)=|E(G)|$ for some almost $(r-1)$-regular $n$-vertex graph $G$.

\begin{thm}
    For any $t,r$ and sufficiently large $n$, we have $\ex(n,M_t,S_r)=\cN(M_t,G)$ for some almost $(r-1)$-regular $n$-vertex graph $G$. Moreover, if an $n$-vertex $S_r$-free graph $G'$ is not almost $(r-1)$-regular, then $\cN(M_t,G')=\ex(n,M_t,S_r)-\Omega(n^{t-1})$.
\end{thm}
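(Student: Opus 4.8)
The plan is to reformulate the problem entirely in terms of degree sequences. A graph is $S_r$-free precisely when its maximum degree is at most $d:=r-1$, and (as noted just before the statement) an $n$-vertex graph with $\Delta\le d$ has at most $\lfloor dn/2\rfloor$ edges, with equality exactly for the almost $(r-1)$-regular graphs. Thus the hypothesis ``$G'$ is not almost $(r-1)$-regular'' is equivalent to $|E(G')|\le \lfloor dn/2\rfloor-1$, and the whole theorem reduces to showing that dropping even a single edge below the maximum already costs $\Omega(n^{t-1})$ copies of $M_t$.

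First I would establish an asymptotic expansion for $\cN(M_t,G)$ in terms of $m=|E(G)|$ and the degree sequence. Writing $P=\sum_v\binom{d(v)}{2}$ for the number of cherries, I claim
\[
\cN(M_t,G)=\binom{m}{t}-P\binom{m-2}{t-2}+O_{r,t}(n^{t-2}).
\]
Here $\binom mt$ counts all $t$-element edge sets; such a set fails to be a matching exactly when it contains a cherry, and summing over cherries gives the first correction $P\binom{m-2}{t-2}$. The error counts each bad set $S$ with multiplicity $\mathrm{ch}(S)-1$, where $\mathrm{ch}(S)$ is the number of cherries inside $S$; using $\mathrm{ch}(S)-1\le\binom{\mathrm{ch}(S)}{2}$ this is at most the number of $t$-sets containing two distinct cherries. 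Two distinct cherries span either $3$ edges (if they share an edge) or $4$, contributing $O(n)\binom{m-3}{t-3}$ and $O(n^2)\binom{m-4}{t-4}$ respectively — both $O(n^{t-2})$, where the bound $O(n)$ on edge-sharing pairs uses $\Delta\le d$. Substituting $P=\tfrac12\sum_v d(v)^2-m$ and expanding the binomials, this rearranges to the clean form
\[
t!\,\cN(M_t,G)=m^t+\tfrac{t(t-1)}{2}m^{t-1}-\tfrac{t(t-1)}{2}m^{t-2}\sum_v d(v)^2+O_{r,t}(n^{t-2}).
\]

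Next I would fix one almost $(r-1)$-regular graph $G^*$ (these exist for all large $n$), for which $m^*=\lfloor dn/2\rfloor$ and $\sum_v d(v)^2=\tfrac{4(m^*)^2}{n}+O(1)$ — the latter being the Cauchy–Schwarz bound $\sum_v d(v)^2\ge(2m)^2/n$ attained with equality. For any non-almost-regular $G'$ with $m'\le m^*-1$, Cauchy–Schwarz gives $\sum_v d'(v)^2\ge 4(m')^2/n$, so the displayed formula yields an upper bound on $\cN(M_t,G')$ with $m'$ in place of $m^*$. Subtracting, the middle term is favorable and can be dropped, and the two $\sum d^2$ terms combine (equality for $G^*$, inequality for $G'$) into $-\tfrac{2t(t-1)}{n}\bigl[(m^*)^t-(m')^t\bigr]$, giving
\[
t!\bigl(\cN(M_t,G^*)-\cN(M_t,G')\bigr)\ge\Bigl(1-\tfrac{2t(t-1)}{n}\Bigr)\bigl[(m^*)^t-(m')^t\bigr]+O_{r,t}(n^{t-2}).
\]
Since $(m^*)^t-(m')^t\ge(m^*)^t-(m^*-1)^t\ge t(m^*-1)^{t-1}=\Omega(n^{t-1})$, for large $n$ the right side is $\Omega(n^{t-1})$. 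As $\cN(M_t,G^*)\le\ex(n,M_t,S_r)$, this both shows every non-almost-regular graph falls short of the maximum by $\Omega(n^{t-1})$ (the ``moreover'') and forces the maximum itself to be attained by an almost $(r-1)$-regular graph (the first assertion).

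The hard part will be the bookkeeping in the expansion: proving the inclusion–exclusion error is genuinely $O(n^{t-2})$ and not merely $O(n^{t-1})$. This requires separating cherry pairs by whether they share an edge, and it is essential that $\Delta\le d$ caps the edge-sharing pairs at $O(n)$ — a naive bound gives $O(n^{t-1})$, which would be fatal, since the main term and the first correction are both of order $n^{t-1}$ and must be weighed against each other. The reason the comparison nonetheless succeeds is the \emph{exact} equality in Cauchy–Schwarz for regular graphs: it makes the first-order correction essentially identical for $G^*$ and any competitor of comparable size, leaving the strictly larger $\binom mt$ term to decide the winner.
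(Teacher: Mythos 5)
Your proposal is correct, but it takes a genuinely different route from the paper's. The paper argues by induction on $t$: it counts copies of $M_t$ by choosing a copy of $M_{t-1}$ and then an independent edge, shows that in an almost $(r-1)$-regular graph all but $O(n^{t-2})$ copies of $M_{t-1}$ extend in essentially $|E(G)|-2(t-1)(r-2)$ ways, and then compares a competitor $G'$ case by case: if $|E(G')|\le (r-1)n/2-2tr$, every extension count is smaller while there are still $\Theta(n^{t-1})$ copies of $M_{t-1}$; if $|E(G')|>(r-1)n/2-2tr$, then all but $O(1)$ edges of $G'$ have both endpoints of degree $r-1$, so each of $\Theta(n^{t-1})$ copies of $M_{t-1}$ loses at least one extension. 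Your argument replaces this induction with a closed-form Bonferroni expansion
\[
t!\,\cN(M_t,G)=m^t+\tfrac{t(t-1)}{2}m^{t-1}-\tfrac{t(t-1)}{2}m^{t-2}\sum_v d(v)^2+O_{r,t}(n^{t-2}),
\]
valid uniformly over graphs of maximum degree at most $r-1$, followed by Cauchy--Schwarz and convexity to compare any competitor against a fixed almost-regular graph. I checked the delicate points: the coefficient bookkeeping is right (the $+\frac{t(t-1)}{2}m^{t-1}$ term correctly combines $-\binom{t}{2}m^{t-1}$ from $\binom{m}{t}$ with $+t(t-1)m^{t-1}$ from substituting $P=\frac12\sum_v d(v)^2-m$); the error analysis correctly exploits $\Delta\le r-1$ to cap edge-sharing cherry pairs at $O(n)$, which is indeed essential since the cherry correction is itself of order $n^{t-1}$; and the reduction ``non-almost-regular $\Leftrightarrow$ $m'\le\lfloor (r-1)n/2\rfloor-1$'' is a valid parity argument. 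What your approach buys: it is non-inductive, it makes the source of the $\Omega(n^{t-1})$ deficit transparent (it is exactly the drop $(m^*)^t-(m')^t$ in the leading term, since Cauchy--Schwarz makes the second-order corrections align), and the expansion also explains the paper's remark that distinct almost-regular graphs (e.g.\ $C_n$ versus disjoint triangles) differ only at order $n^{t-2}$, below the resolution of the theorem. What the paper's induction buys: it avoids any second-order error estimates and works with purely local extension counts, at the cost of a slightly fussier case split.
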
 

Note that this theorem does not completely determine $\ex(n,M_t,S_r)$, since  almost $(r-1)$-regular $n$-vertex graphs may contain different number of copies of $M_t$. For example, consider $C_n$ and $n/3$ vertex-disjoint copies of $K_3$. The first edge can be chosen $n$ ways, and the second edge can be chosen $n-3$ ways. However, in the union of triangles the third edge can be chosen $n-6$ ways always. In $C_n$ for some choices of the first two edges, the third edge can be chosen $n-5$ ways, while for each other choice of the first two edges the third edge can be chosen $n-6$ ways, thus $C_n$ contains more cpies of $M_3$ than $n/3$ vertex-disjoint copies of $K_3$.

\begin{proof}
We use induction on $t$, the statement is trivial for $t=1$. Consider an $S_r$-free graph $G'$. We count the copies of $M_t$ by picking $M_{t-1}$ first, and then an independent edge. Observe that we can pick $M_{t-1}$ $\cN(M_{t-1},G')$ ways, and then an edge can be picked among those that are not incident to the $2t-2$ vertices in the matching we picked. This can be done at least $|E(G')|-(2t-2)(r-1)$ ways.

In an almost $(r-1)$-regular graph $G$, when we pick $i$ independent edges, all but $O(1)$ other edges each have that both endpoints are not adjacent the $2i$ vertices picked earlier. We can pick the edges of an $M_i$ one by one satisfying the above property. This way we obtain all but $O(n^{i-1})$ copies of $M_i$. The above property implies that when we pick the next edge, we always have exactly $|E(G)-2i(r-2)$ or $|E(G)-2i(r-2)+1$ choices, the second possibility is when one of the $2i$ vertices have degree $r-2$ (and this happens $O(n^{i-1})$ times). Indeed, we have to avoid the edges that are incident to the already picked vertices, and we ensured that no edge is counted twice. Note that out of these $|EG)|-O(1)$ edges, $O(1)$ edges have a neighbor adjacent to the $2i$ vertices picked earlier. 


Assume that the statement holds for $t-1$ and consider $|E(G')|$. If $\cN(M_{t-1},G)=o(n^{t-1})$, then clearly $\cN(M_t,G)=o(n^t)$ and we are done, hence we assume that $\cN(M_{t-1},G)=\Theta(n^{t-1})$. If $|E(G')|\le (r-1)n/2-2tr$, then we can pick the last edge at most $|E(G)|-2tr$ ways, thus each term is less than doing the same in $G$, and there are less terms, but still $\Theta(n^{t-1})$ many, so we lose $\Omega(n^{t-1})$. If $|E(G')|>(r-1)n/2-2tr$, then all but at most $4tr$ of the vertices have degree $r-1$, thus all but a set $E_0$ of at most $4tr^2$ edges have endpoints of degree $r-1$. For the copies of $M_{t-1}$ avoiding $E_0$, the last edge can be picked at most $|E(G')|-2(t-1)(r-2)<|E(G)|-2(t-1)(r-2)$ ways, thus we lose at least one copy of $M_t$ for each such copy of $M_{t-1}$, and each copy of $M_t$ is counted at most $t$ times. The $O(n^{t-2})$ other copies of $M_{t-1}$ are each contained in at most $|E(G')|<|E(G)|-1$ copies of $M_t$. Compared to $G$, we have $\Theta(n^{t-1})$ copies of $M_{t-1}$ that are contained in less copies of $M_t$ and $O(n^{t-2})$ copies that may be contained in more copies of $M_t$ by $O(1)$. This implies that $\cN(M_t,G')\le \cN(M_t,G)-\Theta(n^{t-1})+O(n^{t-2})$,
completing the proof.
\end{proof} 

Let us turn to forbidden paths and let $P_k$ denote the path on $k$ vertices. Faudree and Schelp \cite{FSch} determined $\ex(n,P_k)$, improving a result of
Erd\H os and Gallai \cite{Er-Ga}.
Let $G_{n,k,\ell}=\ell K_{k-1}\cup K_{(k-2)/2}+\overline{K_{n-\ell(k-1)-(k-2)/2}}$.

\begin{thm}[Faudree and Schelp \cite{FSch}]\label{fash}
    We have $\ex(n,P_k)=|E(aK_{k-1}\cup K_b)$ for $n=a(k-1)+b$, where $a$ and $b$ are non-negative integers and $b<k$. Furthermore we have equality if and only if $G=aK_{k-1}\cup K_b$ or $k$ is even, $b=k/2$ or $b=1+k/2$, in which case $G_{n,k,\ell}$ is another extremal graph for $0\le\ell\le a$.
\end{thm}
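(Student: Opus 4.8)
My plan is to prove the edge bound and the list of extremal graphs by first reducing to connected graphs and then invoking the structure of connected $P_k$-free graphs. Write $f(n)=a\binom{k-1}{2}+\binom{b}{2}=|E(aK_{k-1}\cup K_b)|$ for $n=a(k-1)+b$; this is the claimed value of $\ex(n,P_k)$. Comparing with the Erd\H os--Gallai bound \cite{Er-Ga} one computes $f(n)=(k-2)n/2-b(k-1-b)/2$, so the real content is the saving of $b(k-1-b)/2$ edges over $(k-2)n/2$, together with rigidity of the optimum. The first step is to record that $f$ is \emph{superadditive}, $f(n_1)+f(n_2)\le f(n_1+n_2)$, since a disjoint union of optimal clique packings on $n_1$ and $n_2$ vertices is a (generally suboptimal) clique packing on $n_1+n_2$ vertices. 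Hence if $G$ has components $C_1,\dots,C_m$, then $|E(G)|=\sum_i|E(C_i)|\le\sum_i f(|V(C_i)|)\le f(n)$ by induction on $n$, so it suffices to prove the bound for connected $G$, while remembering that equality in the disconnected case forces each $C_i$ to be extremal \emph{and} the remainders to recombine without loss.

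The heart of the argument is the connected case, which I would treat by analysing a longest path $P=v_1\cdots v_p$. Since $G$ is $P_k$-free, $p\le k-1$, and each endpoint of $P$ has all of its neighbours on $P$. Applying P\'osa's rotations to $v_1$ and $v_p$ bounds the degrees along $P$ (for instance $d(v_1)+d(v_p)\le p-1\le k-2$ when $v_1v_p\notin E(G)$), and the set of vertices reachable as endpoints forms a sparse set with small common neighbourhood. One standard consequence (a Dirac-type lemma, tight on $K_{\delta,\delta+1}$) is that a connected graph with minimum degree $\delta$ has a path on at least $\min\{n,2\delta+1\}$ vertices; thus a connected $P_k$-free graph on $n\ge k$ vertices satisfies $\delta\le\lfloor(k-2)/2\rfloor$. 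Feeding this into an induction on $n$ forces an edge-maximal connected $P_k$-free graph into one of a few rigid shapes, namely a clique $K_{k-2}$ with the remaining vertices attached as pendants at a single vertex, or a complete split graph $K_s+\overline{K_{n-s}}$ with $s=\lfloor(k-2)/2\rfloor$; a direct computation shows neither exceeds $f(n)$. The delicate point here is that $f$ has flat spots (e.g.\ $f(n)=f(n-1)$ when $b=1$, and $f(n)-f(n-1)=b-1<\lfloor(k-2)/2\rfloor$ for small $b$), so the induction cannot simply delete a minimum-degree vertex and must carry the full structural dichotomy precisely in the small-remainder regime.

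For the characterization I would trace equality through both reductions. Uniqueness, $G=aK_{k-1}\cup K_b$, holds whenever the only way to realise $f$ on each vertex-count is a clique, which is the generic situation. The exception comes from an arithmetic coincidence available only when $k$ is even: the complete split graph $K_{(k-2)/2}+\overline{K_m}$ is $P_k$-free, since its longest path alternates through the independent set and has $2\cdot\tfrac{k-2}{2}+1=k-1$ vertices, and for the appropriate size $m$ it carries exactly as many edges as a clique block of the same order. At the corresponding boundary remainders one may therefore replace some clique blocks by such a split graph while keeping any number $\ell$ of genuine $K_{k-1}$ components intact, producing exactly the family $G_{n,k,\ell}$ for $0\le\ell\le a$. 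Verifying that these are the only extremal configurations then amounts to checking that every other connected extremal component is forced by the dichotomy above to be a clique, and that no mixed component can tie.

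The step I expect to be hardest is the exact connected bound together with its equality analysis, i.e.\ pinning the connected extremal graphs down to precisely the clique-with-pendants graph and the complete split graph. The rotation argument yields the endpoint degree-sum bound routinely, but converting it into the tight global count $f(n)$ and, crucially, ruling out all near-extremal connected graphs other than these two families, is where the careful bookkeeping lies. This is also exactly the step that determines the critical even-$k$ boundary remainders at which the split-graph family $G_{n,k,\ell}$ appears, and it is the technical core of the Faudree--Schelp argument.
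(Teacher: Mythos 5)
First, a point of reference: the paper does not prove Theorem~\ref{fash} at all --- it is quoted as a known result of Faudree and Schelp \cite{FSch} and used as a black box in the proof of Theorem~\ref{pati}. So your proposal can only be judged on its own merits, and on those merits it has a genuine gap. The reduction to connected graphs via superadditivity of $f(n)=a\binom{k-1}{2}+\binom{b}{2}$ is correct and standard. But the entire content of the theorem is then pushed into the claim that ``feeding this into an induction on $n$ forces an edge-maximal connected $P_k$-free graph into one of a few rigid shapes'' (a clique $K_{k-2}$ with pendants at one vertex, or the complete split graph $K_{\lfloor (k-2)/2\rfloor}+\overline{K_{n-\lfloor (k-2)/2\rfloor}}$). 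That claim is not a consequence of the P\'osa-rotation endpoint bound $d(v_1)+d(v_p)\le p-1$ or of the Dirac-type bound $\delta\le\lfloor (k-2)/2\rfloor$ that you quote; it is itself a theorem of Kopylov / Balister--Gy\H ori--Lehel--Schelp type whose proof is comparable in difficulty to the Faudree--Schelp theorem you are trying to prove. You acknowledge this yourself (``it is the technical core of the Faudree--Schelp argument''), which means the proposal is a plausible plan, not a proof: the exact connected bound, the classification of connected graphs attaining it, and the recombination argument showing that equality in the disconnected case forces all but one component to be $K_{k-1}$ are all asserted rather than established.

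A concrete symptom that the equality analysis was never carried out: if you complete it, the tie condition between the split graph and the clique packing is $b(k-1-b)=\tfrac{k-2}{2}\cdot\tfrac{k}{2}$, whose solutions are $b=k/2-1$ and $b=k/2$. So for even $k$ the second family of extremal graphs occurs exactly at remainders $b\in\{(k-2)/2,\,k/2\}$ --- in particular \emph{not} at $b=1+k/2$: the statement as printed in the paper contains a typo here ($1+k/2$ should be $k/2-1$), and indeed one can check directly that for $b=1+k/2$ the graph $G_{n,k,\ell}$ with $\ell<a$ has strictly fewer edges than $aK_{k-1}\cup K_b$ (e.g.\ $k=6$, $n=4$: $K_2+\overline{K_2}$ has $5$ edges versus $6$ for $K_4$). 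Your sketch hides all of this arithmetic behind ``for the appropriate size $m$'' and ``the corresponding boundary remainders,'' which is precisely the part that must be made explicit for the ``if and only if'' to have any content. Writing out that one computation would have both closed part of the gap and exposed the misprint in the statement.
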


\begin{thm}\label{pati}
    We have $\ex(n,M_t,P_{k})=\cN(M_t,aK_{k-1}\cup K_b)$ for $n$ sufficiently large and of the form $n=a(k-1)+b$, where $a$ and $b$ are non-negative integers and $b<k$.
\end{thm}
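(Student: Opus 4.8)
The plan is to pin down both the leading and the second-order term of $\cN(M_t,G)$ over $P_k$-free graphs $G$, since the leading term alone does not distinguish the claimed extremal graph. Picking $t$ edges in order and discarding the non-matchings gives $\cN(M_t,G)\le\binom{|E(G)|}{t}$, and by Theorem \ref{fash} we have $|E(G)|\le\ex(n,P_k)=|E(aK_{k-1}\cup K_b)|=:m^*$; together these yield $\cN(M_t,G)\le\binom{m^*}{t}=(1+o(1))m^{*t}/t!$, a leading term attained by $aK_{k-1}\cup K_b$. Hence every edge-maximal $P_k$-free graph ties at order $n^t$, and the theorem is really about the coefficient of $n^{t-1}$. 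A first reduction: if a component of $G$ has at most $k-1$ vertices, replacing it by the clique on the same vertex set keeps $G$ being $P_k$-free and, since $\cN(M_t,\cdot)$ is monotone under adding edges, does not decrease $\cN(M_t,G)$; so I may assume every small component is a clique.

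The heart of the argument is the identity $\cN(M_t,G)=\binom{|E(G)|}{t}-D_t(G)$, where $D_t(G)$ counts the $t$-subsets of $E(G)$ that contain two edges sharing a vertex. When degrees are bounded the dominant contribution to $D_t(G)$ comes from choosing one such adjacent pair together with $t-2$ further edges, so to the two leading orders
\[
\cN(M_t,G)=\frac{|E(G)|^t}{t!}-\frac{c(G)\,|E(G)|^{t-2}}{(t-2)!}+O(n^{t-2}),
\]
where $c(G)=\sum_{v}\binom{d(v)}{2}$ is the number of cherries ($P_3$'s) in $G$. Writing $|E(G)|=m^*-d$ and expanding, maximizing $\cN(M_t,G)$ to order $n^{t-1}$ amounts to minimizing $d\,m^*+(t-1)\,c(G)$. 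For $aK_{k-1}\cup K_b$ one has $d=0$ and $c=(1+o(1))\binom{k-2}{2}n$. A competitor can beat this only by lowering $c(G)$ by $\Omega(n)$; but among clique-unions (degrees at most $k-2$), lowering the cherry count by $\Omega(n)$ forces using $\Omega(n)$ smaller cliques and hence losing $\Omega(n)$ edges, i.e.\ $d=\Omega(n)$, which increases $d\,m^*$ by $\Omega(n^2)$. Thus $aK_{k-1}\cup K_b$ is optimal among clique-unions; by convexity of $\binom{\cdot}{2}$ it is also the unique clique-union with $|E|=m^*$, namely all parts of size $k-1$ except the forced remainder $b$.

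What remains, and what I expect to be the main obstacle, is to eliminate graphs outside the bounded-degree clique-union world. Two phenomena must be controlled. First, components on at least $k$ vertices: by the Erd\H os--Gallai bound such a component has at most $(k-2)/2$ edges per vertex, the same density as $K_{k-1}$, and I would show that replacing it by a disjoint union of cliques of size at most $k-1$ on the same vertices does not decrease $\cN(M_t,G)$, reducing to the clique-union case. Second, vertices of unbounded degree: the two-term asymptotic for $D_t(G)$ degenerates once $c(G)=\omega(n)$, which is exactly the regime of the second Faudree--Schelp family $G_{n,k,\ell}$, whose $\Theta(n^2)$ cherries force a drastically smaller $\cN(M_t,\cdot)$; here a separate estimate (bounding, for each high-degree vertex, the few matchings through it, or a Bonferroni-type upper bound on $D_t$) shows such graphs already lose at order $n^{t-1}$. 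Handling the interaction of edge deficit and cherry surplus uniformly across all degree profiles, rather than only in these two clean regimes, is the delicate point; once every case is funneled into the clique-union comparison, the second-order optimization of the previous paragraph finishes the proof for $n$ large.
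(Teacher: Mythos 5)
Your proposal has a genuine gap, and it sits exactly where the theorem's difficulty lies: the reduction of an arbitrary $P_k$-free graph to the ``bounded-degree clique-union world'' is never proved, only announced. For components on at least $k$ vertices you write that you ``would show'' that replacing them by disjoint cliques does not decrease $\cN(M_t,G)$, for high-degree vertices you appeal to ``a separate estimate,'' and you then concede that handling the interaction of edge deficit and cherry surplus uniformly across all degree profiles is ``the delicate point.'' That delicate point \emph{is} the theorem. Moreover, your main tool, the expansion $\cN(M_t,G)=\binom{|E(G)|}{t}-D_t(G)$ with $D_t(G)\approx c(G)\binom{|E(G)|-2}{t-2}$, is only valid with error $O(n^{t-2})$ when the maximum degree is bounded: the inclusion-exclusion corrections involve pairs of cherries and quantities like $\sum_v\binom{d(v)}{3}$, which already blow up for degrees of order $\sqrt{n}$. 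So the tool cannot be the vehicle that funnels graphs with intermediate or unbounded degrees into the clique-union comparison. The paper supplies precisely the structural inputs your sketch lacks: for a component of linear order it invokes the F\"uredi--Kostochka--Verstra\"ete stability theorem (Theorem \ref{fkvstabi}), by which a dense connected $P_k$-free graph either embeds in $H(m,k,\ell)$ (then Lemma \ref{lemi}, exploiting the linear-degree vertex, forces a loss at the \emph{leading} order $n^t$) or is a star forest after deleting few vertices (then either Lemma \ref{lemi} applies again, or one is choosing matching edges from at most $n-1$ star-forest edges, far too few when $k>4$); for sublinear components it uses the exact Faudree--Schelp theorem (Theorem \ref{fash}), replacing any component that is not a clique and not $G_{m,k,0}$ by a clique-union, gaining at least one edge and hence $\Omega(n^{t-1})$ copies against an $o(n^{t-1})$ loss, with $G_{m,k,0}$ handled by a separate direct comparison. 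Nothing in your proposal substitutes for these inputs.

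There is also a concrete false statement in your setup: ``every edge-maximal $P_k$-free graph ties at order $n^t$,'' so that the theorem ``is really about the coefficient of $n^{t-1}$.'' By Theorem \ref{fash}, for even $k$ and $b\in\{k/2,\,1+k/2\}$ the graph $G_{n,k,0}=K_{(k-2)/2}+\overline{K_{n-(k-2)/2}}$ is edge-extremal, yet every edge of it meets the $(k-2)/2$ dominating vertices, so it contains \emph{no} copy of $M_t$ once $t>(k-2)/2$, and for $2\le t\le (k-2)/2$ its leading constant is $\binom{(k-2)/2}{t}$, strictly below $((k-2)/2)^t/t!$. Edge count alone does not control the leading term; concentrated degrees already cost at order $n^t$ (this is exactly what Lemma \ref{lemi} quantifies), so a purely second-order comparison cannot carry the whole proof --- your own later remark that the $G_{n,k,\ell}$ family is ``drastically smaller'' contradicts this opening claim. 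Two minor further slips: your displayed expansion omits the $-\binom{t}{2}|E(G)|^{t-1}/t!$ term coming from $\binom{|E(G)|}{t}$ itself (harmless for comparisons, but it is $\Theta(n^{t-1})$, not $O(n^{t-2})$), and the convexity argument that is sound --- the comparison among unions of cliques of size at most $k-1$ --- is also the part the paper dispatches quickly; the labor lives entirely in the cases you deferred.
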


We will need a stability theorem of F\"uredi, Kostochka and Verstra\"ete \cite{fkv}.

\begin{thm}\label{fkvstabi}
    Let $\ell\ge 2$ and $n\ge 3\ell-1$ and $k\in 2\ell+1,2\ell+2$, and let $G$ be a connected $n$-vertex graph containing no $k$-vertex path. Then $|E(G)|\le \binom{k-1}{2}+(\ell-1)(n-k+\ell+1)$ unless (a) $k=2\ell$, $k\neq 6$ and $G\subseteq H(n,k,\ell)$ or (b) $k=2\ell+1$ or $k=6$ and we can obtain a star forest from $G$ by deleting a set $A$ of at most $\ell-1$ vertices.
\end{thm}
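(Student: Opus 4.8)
The plan is to transfer the path problem to a cycle (circumference) problem by \emph{coning}, and then to run Kopylov's extremal machinery together with its stability refinement. Write $\hat G := G \vee K_1$ for the graph obtained from $G$ by adding one new vertex $w$ joined to all of $V(G)$. The key observation is that $G$ contains a path on $k$ vertices if and only if $\hat G$ contains a cycle on $k+1$ vertices: a $P_k$ in $G$ closes through $w$ to a cycle of length $k+1$, while conversely any cycle of length $\ge k+1$ in $\hat G$ yields a $P_k$ in $G$ (delete $w$ if the cycle uses it, otherwise the cycle already lies in $G$). Thus $G$ is $P_k$-free if and only if $\hat G$ has circumference at most $k$. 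The decisive gain is that, since $G$ is connected on $n \ge 2$ vertices, $\hat G$ is automatically $2$-connected (deleting $w$ leaves the connected graph $G$, deleting any other vertex leaves a graph still joined through $w$); hence the mere connectivity hypothesis on $G$ is upgraded for free, and no separate block-decomposition of $G$ is needed. Finally $|E(\hat G)| = |E(G)| + n$.

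Next I would apply Kopylov's theorem to $\hat G$, which is $2$-connected on $m := n+1$ vertices with no cycle of length $\ge c := k+1$. Kopylov's bound is $|E(\hat G)| \le \max\{h(m,c,2),\, h(m,c,\lfloor (c-1)/2\rfloor)\}$, where $h(m,c,a) := \binom{c-a}{2} + a(m-c+a)$; since $h(m,c,\cdot)$ is convex in $a$, the maximum occurs at one of the two endpoints $a=2$ and $a=\lfloor (c-1)/2\rfloor = \lfloor k/2\rfloor = \ell$, and the only extremal graphs are the Kopylov graphs $K_a \vee (K_{c-2a}\cup \overline{K_{m-c+a}})$. Every vertex of the dominating clique $A$ of such a graph is universal, so the cone vertex $w$ may be taken in $A$; deleting it shifts the join-parameter from $a$ to $a-1$, and with $m=n+1$, $c=k+1$ and subtraction of the $n$ edges at $w$ this produces two concrete constructions for $G$. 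The balanced value $a=\ell$ de-cones to $K_{\ell-1}\vee(K_{c-2\ell}\cup\overline{K_{n-k+\ell}})$, whose central set has size $\ell-1$ and whose $n$-coefficient is $\ell-1$ (this is where the $(\ell-1)$ in $\binom{k-1}{2}+(\ell-1)(n-k+\ell+1)$ originates), while $a=2$ de-cones to a large-clique graph. The stated threshold is exactly the value placed relative to these two competitors so that exceeding it forces a single structure.

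To obtain the structural dichotomy rather than only an inequality, I would invoke the stability form of Kopylov's theorem for circumference: if $|E(\hat G)|$ exceeds the threshold, then $\hat G$ is a spanning subgraph of one of the two extremal Kopylov graphs. De-coning by deleting the universal vertex $w\in A$ then reads off the two alternatives precisely. The large-clique optimum becomes case (a), $G\subseteq H(n,k,\ell)$, which is the dominant exception for even $k=2\ell$. The balanced optimum becomes $K_{\ell-1}\vee(K_{c-2\ell}\cup\overline{K_{\,\cdot\,}})$: here $c-2\ell$ equals $1$ when $k=2\ell$ and $2$ when $k=2\ell+1$, so deleting the $\le \ell-1$ central vertices leaves either an independent set or a single edge plus isolated vertices, i.e. a star forest in both cases — this is case (b), the dominant exception for odd $k=2\ell+1$. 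The genuinely sporadic value $k=6$ (where the two Kopylov maxima cross and the circumference stability statement itself has an extra extremal family) I would settle by a direct check, which also explains its explicit appearance in clause (b).

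The main obstacle is the third step, the stability bookkeeping. I need a clean ``subgraph of one extremal Kopylov graph, \emph{or} sparse after deleting a few vertices'' conclusion for $2$-connected graphs of bounded circumference whose edge count only meets, rather than strictly drops below, the two competing maxima; establishing (or carefully citing) this circumference-stability statement and then checking that de-coning maps its two regimes \emph{exactly} onto families (a) and (b) with the correct parameters $\ell-1$, $c-2\ell$, and constants is the delicate part. In particular one must rule out near-extremal cones that interpolate between the large-clique and balanced graphs yet survive deletion of $w$ as a connected $P_k$-free graph denser than the threshold, and one must verify the boundary behaviour at $k=2\ell$ versus $k=2\ell+1$ and the sporadic case $k=6$ where the inequality is tight. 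By contrast, the first two steps are essentially a clean equivalence plus a convexity computation.
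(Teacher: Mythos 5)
A point of context first: the paper itself does not prove Theorem~\ref{fkvstabi} --- it is quoted (with transcription typos: the hypothesis $k\in\{2\ell+1,2\ell+2\}$ is incompatible with the case labels $k=2\ell$ in (a) and the role of $k=2\ell+1$ in (b)) from F\"uredi--Kostochka--Verstra\"ete \cite{fkv}, so there is no internal proof to compare against. That said, your first two steps are sound and are in fact exactly the reduction FKV themselves use to pass from cycles to paths: coning $G$ to $\hat G=G\vee K_1$ gives a $2$-connected graph on $n+1$ vertices with $|E(\hat G)|=|E(G)|+n$, and $G$ is $P_k$-free if and only if $\hat G$ has circumference at most $k$; the convexity analysis of $h(m,c,a)$ is also fine modulo parameter bookkeeping (with $k\in\{2\ell+1,2\ell+2\}$ one has $\lfloor k/2\rfloor\in\{\ell,\ell+1\}$, not always $\ell$, a discrepancy you should at least have flagged given the inconsistency in the statement).

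The genuine gap is your third step. You invoke a ``stability form of Kopylov's theorem'' asserting that if $|E(\hat G)|$ exceeds the threshold then $\hat G$ is a subgraph of one of the two extremal Kopylov graphs. No such statement is true, and none is available to cite independently of the theorem being proved: the circumference-stability theorem \emph{is} the main result of \cite{fkv}, and its exceptional class in the odd case is strictly wider than subgraphs of the balanced graph $K_{\ell}\vee(K_{c-2\ell}\cup\overline{K_{m-c+\ell}})$ --- it consists of all graphs that become star forests after deleting a bounded set of vertices, with stars of arbitrary sizes and arbitrary attachments to the deleted set, most of which embed in no $H(m,c,a)$. This is precisely why clause (b) of the theorem is phrased as ``delete a set $A$ of at most $\ell-1$ vertices to obtain a star forest'' rather than as a containment; your de-coning of the balanced extremal graph only produces the very special star forests consisting of one $K_1$ or $K_2$ plus isolated vertices, and so cannot recover case (b). The star-forest alternative must be built into the circumference statement itself, and establishing that statement (Kopylov's disintegration technique, the analysis of near-extremal $2$-connected graphs, and the sporadic $k=6$ family) is the entire content of the cited result. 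As written, your proposal reproduces the easy reduction and defers the hard theorem to a black box that you moreover mis-state, so it does not constitute a proof.
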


Here $H(n,k,a)$ is the following graph. We take a set $A$ of order $a$, a set $B$ of order $n-k+a$ and a set $C$ of order $k-2a$. We add all the edges between $A$ and $B$ and all the edges inside $A\cup C$. In particular, $H(n,k,t-1)$ contains a vertex of degree $n-1$, and this is the only property of this graph we will use in addition to Theorem \ref{fkvstabi}.

\begin{proof}[Proof of Theorem \ref{pati}]
    Let $G$ be an $n$-vertex $P_k$-free graph. Assume first that each component is of order 
    $o(n)$. Assume that there is a component that is $G_{m,k,0}$ for some $m$. We write $m=a'(k-1)+b'$ with $b'<k$. Then we replace this component by $a'K_{k-1}\cup K_{b'}$. By Theorem \ref{fash} the number of edges does not change. We claim that the number of copies of other matchings does not decrease either. Indeed, after picking $i$ independent edges from $G_{m,k,0}$, the next can be picked $|E(G_{m-2i,k-2i,0})|$ ways. Let us write $m-2i=a''(k-2i-1)+b''$ with $b''<k$, then $|E(G_{m-2i,k-2i,0})|=|E(a''K_{k-2i-1}\cup K_{b''})|$. After picking $i$ independent edges from $a'K_{k-1}\cup K_{b'}$, we are left with $a'$ cliques each of order at least $k-2i$, and one clique of order at most $b'$ and at least $k-2i$. We can obtain this graph from $a''K_{k-2i-1}\cup K_{b''}$ by moving vertices from cliques to smaller cliques. Each such step increases the number of edges. This shows that inside the part we changed, the number of copies of $M_i$ does not decrease. As each copy of $M_t$ intersects this part in a copy of $M_i$ for some $i$ and is unchanged outside, we have that the number of copies of $M_t$ does not decrease.

        Assume now that there is a component $K$ on $m$ vertices that is not a clique and not $G_{m,k,0}$. Then we can replace this component by $a'K_{k-1}\cup K_{b'}$. The number of copies of $M_i$ may decrease for $i>1$, but by Theorem \ref{fash}, the number of edges increases. Each copy of $M_t$ intersects $K$ in a copy of $M_i$ for some $i$, and each copy of $M_i$ in $K$ is extended to a copy of $M_t$ at most $O(n^{t-i})$ ways. Therefore, by this change we lose $o(n^{t-1})$ copies of $M_t$ (recall that $m=o(n)$). If there are $\Omega(n^{t-1})$ copies of $M_{t-1}$ outside $K$, then the number of copies of $M_t$ in $G$ increases by $\Omega(n^{t-1})$ because of the increase of the number of edges in $G$. If there are $o(n^{t-1})$ copies of $M_{t-1}$ outside $K$, then there are $o(n^t)$ copies of $M_t$ in $G$ and we are done.

Assume now that there is a component $K'$ of order 
$m=\Omega(n)$. Then we can apply Theorem \ref{fkvstabi} to this component. We obtain that there are three possibilities. If $K'$ is a subgraph of $H(m,k,\ell)$ or $|E(K')|\le |E(H(m,k,\ell-1))|$, then $\cN(M_i,K)\le \cN(M_i,H(m,k,\ell))<\cN(M_i,a'K_{k-1}+K_{b'})$ for every $1<i\le t$. Indeed, the number of edges in $H(m,k,\ell)$ is not more than in $a'K_{k-1}+K_{b'}$, and there is a vertex of linear degree (except for the trivial case $\ell-1=0$), thus Lemma \ref{lemi} implies the statement about $M_i$. Clearly the number of copies of $M_0$ and $M_1$ does not decrease either. As each copy of $M_t$ intersects $K'$ in an $M_i$ for some $i$, the proof is complete in this case.

If we can obtain a star forest from $K'$ by deleting a set $S$ of at most $\ell-1$ vertices, we count the copies of $M_i$ by picking edges incident to $S$, or edges not incident to $S$. We fix $\alpha>0$. If at any time we pick an edge less than $(1-\alpha)(k-2)n/2$ ways, then we are done. In particular, this is the case if we pick an edge incident to $S$ if there are $o(n)$ edges incident to $S$. If there are $\Omega(n)$ edges incident to $S$, then Lemma \ref{lemi} completes the proof. We are left with the case we pick each edge from the star forest. Clearly there are at most $n-1$ edges there, thus we are done if $k>4$. If $k=4$, $S$ must be empty and it is easy to see that any star forest contains less copies of $M_t$ than $a'K_3+K_{b'}$.
\end{proof}

\section{Forbidden matchings}
We will use the following theorem of Berge and Tutte \cite{berg}.

\begin{thm}[Berge-Tutte]  A graph $G$ is $M_{s}$-free if and only if there is a set $B\subset V(G)$ such that removing $A$ cuts $G$ to connected components $G_1,\dots,G_m$ with each $V(G_i)$ of odd order such that $|B|+\sum_{i=1}^m \frac{|V(G_i)|-1}{2}\le s-1$.
\end{thm}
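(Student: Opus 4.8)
The plan is to prove the two implications separately, treating the classical Tutte--Berge deficiency formula as the engine for the harder direction and reserving the real work for matching the exact shape of the statement.

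The easy direction is sufficiency: suppose such a set $B$ exists. I would bound an arbitrary matching $M$ of $G$ by splitting its edges into two classes. At most $\abs{B}$ edges of $M$ are incident to $B$, since $M$ meets each vertex of $B$ at most once. Every other edge of $M$ has both endpoints in $V(G)\setminus B$, hence lies inside a single component $G_i$; the edges of $M$ inside $G_i$ form a matching there, so there are at most $\lfloor \abs{V(G_i)}/2\rfloor=\frac{\abs{V(G_i)}-1}{2}$ of them, using that $G_i$ is odd. Summing gives $\abs{M}\le \abs{B}+\sum_i\frac{\abs{V(G_i)}-1}{2}\le s-1$, so $G$ is $M_s$-free.

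For necessity, suppose $G$ is $M_s$-free, i.e. its maximum matching has size at most $s-1$. For a set $U\subseteq V(G)$ I would write $f(U)=\abs{U}+\sum_C\lfloor \abs{C}/2\rfloor$, the sum ranging over the components $C$ of $G-U$. A one-line computation gives $f(U)=\tfrac12(\abs{V(G)}+\abs{U}-o(G-U))$, where $o(G-U)$ denotes the number of odd components, so that $\min_U f(U)$ is precisely the Tutte--Berge expression for the maximum matching size. Hence there is a set $U$ with $f(U)\le s-1$; the catch is that $G-U$ may still have even components, whereas the statement demands all components be odd.

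The remaining and only delicate step is to convert such a $U$ into a set $B$ for which every component of $G-B$ is odd, without increasing $f$. I would repeatedly pick an even component $C$ of the current $G-U$, choose any vertex $v\in C$, and replace $U$ by $U\cup\{v\}$. Since $\abs{C}$ is even, $C-v$ has odd order, so it has an odd number of odd components and in particular at least one; the identity $f(U\cup\{v\})-f(U)=\tfrac{1-o(C-v)}{2}\le 0$ then shows $f$ never increases. Moreover the total number of vertices lying in even components strictly drops at each step, because $C$ (with $\abs{C}$ vertices) is replaced by the components of $C-v$, whose even pieces have at most $\abs{C}-2$ vertices in total while the other components of $G-U$ are untouched. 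This monovariant forces termination with all components odd and $f(B)\le s-1$, which is exactly the desired inequality. I expect this reduction to odd components to be the main obstacle, and the parity observation that an odd-order graph has at least one odd component is precisely what makes the argument close; everything else is bookkeeping.
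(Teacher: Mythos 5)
The paper never proves this statement: it is quoted as a classical result (the citation is Berge's 1958 note), so there is no internal proof to compare yours against, and your proposal has to be judged on its own merits. On those merits it is correct. The sufficiency direction is complete: every edge of a matching either meets $B$ or lies inside a single component $G_i$, and an odd component of order $|V(G_i)|$ holds at most $\frac{|V(G_i)|-1}{2}$ matching edges, giving $|M|\le |B|+\sum_i\frac{|V(G_i)|-1}{2}\le s-1$. For necessity, your identity $f(U)=\frac{1}{2}\left(|V(G)|+|U|-o(G-U)\right)$ is right, and the reduction to all-odd components is the genuine content of the write-up: if $C$ is an even component of $G-U$ and $v\in C$, then $C-v$ has odd order, hence at least one odd component, so $f(U\cup\{v\})-f(U)=\frac{1}{2}\left(1-o(C-v)\right)\le 0$, and your monovariant (the number of vertices lying in even components drops by at least two per step) terminates the process with a set $B$ such that every component of $G-B$ is odd and $f(B)\le s-1$, which is exactly the paper's condition. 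This correctly bridges the only gap between the usual deficiency formula, which merely counts odd components, and the formulation used in the paper, which insists that all components be odd.

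The one caveat you should be explicit about: your necessity direction is not a proof from scratch, because the Tutte--Berge deficiency formula $\nu(G)=\frac{1}{2}\min_U\left(|V(G)|+|U|-o(G-U)\right)$ that you invoke as a black box is essentially the very theorem the paper attributes to Berge and Tutte; the difference between it and the stated version is precisely the all-odd normalization you supply. Since the paper itself treats the result as known and cites it without proof, this division of labor is defensible, but a fully self-contained argument would still owe the hard inequality of the deficiency formula (for instance via induction, via the Gallai--Edmonds decomposition, or by the standard reduction to Tutte's 1-factor theorem obtained by joining a set of $|V(G)|-2\nu(G)$ new universal vertices).
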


We call the partitions satisfying the above theorems \textit{Berge-Tutte partitions}. Clearly, we can add any edges incident to some vertex of $A$ and any edges inside a component $G_i$ without increasing $|A|+\sum_{i=1}^m \frac{|V(G_i)|-1}{2}$, and without decreasing the number of copies of $H$. Therefore, there is an $M_s$-free $n$-vertex graph with $\ex(n,H,M_s)$ copies of $H$ where the vertices in $A$ have degree $n-1$ and each $G_i$ is a clique. Given an integer $s$, there are $O_s(1)$ such graphs, hence we can consider this problem mostly solved. Still, given $H$, it is of interest to narrow down this list to those graphs that actually appear as extremal graphs. 
 Liu and Zhang \cite{liuz}, extending earlier results \cite{wang,lyz} proved that for every complete multipartite graph $H$ and every $n\ge 2s-1$, $\ex(n,H,M_s)=\max\{\cN(H,K_{2s-1}, \cN(H,H(n,2s-1,s-1))\}$.

 Gerbner \cite{ger4} determined the order of magnitude of $\ex(n,H,sK_k)$ for every $H$. In particular, for matchings it states the following. Let $U\subset V(H)$. A \textit{partial $(m,U)$-blowup} of $H$ is obtained by replacing each vertex $u\in U$ with $m$ vertices $u_1,\dots,u_m$, and replacing each edge by a complete bipartite graph between the corresponding vertices. Let us consider a largest set $U\subset V(H)$ such that no partial $(m, U)$-blowup of $H$ contains $M_s$, and let $b(H,s)$ denote the order of $U$. Then $\ex(n,H,M_s)=\Theta(n^{b(H,s)})$. 
 
 Observe that another characterization can be obtained using the Berge-Tutte theorem. If $H$ contains $M_s$, then clearly $\ex(n,H,M_s)=0$. Otherwise, there is a Berge-Tutte partition of $H$. Let $b$ the largest integer such that there is a Berge-Tutte-partition of $H$ where $b$ vertices belong to one-vertex components $G_i$. Then $\ex(n,H,M_s)=\Theta(n^{b})$, i.e., $b=b(H,s)$. Indeed, $\ex(n,H,M_s)=\Omega(n^{b})$, since we can pick the $b$ vertices from $\Theta(n)$ vertices $\Theta(n^b)$ ways, and $\ex(n,H,M_s)=O(n^b)$, since if we fix a Berge-Tutte partition of an $n$-vertex $M_s$-free graph $G$, there are $O(1)$ ways to embed at least $|V(H)|-b$ vertices to components $G_i$ of order more than 1 and to $A$.

It is not clear how to obtain a simpler characterization of $b(H,s)$. Clearly we cannot blow up the adjacent vertices without creating $M_s$, thus $b(H,s)\le\alpha(H)$. 
Let $\tau(H)$ denote the smallest number of vertices such that each edge is incident to at least one of them. Then the other vertices form a largest independent set, i.e., $\tau(H)=|V(H)|-\alpha(H)$. 
If $\tau(H)\le s-1$, then we can blow up each vertex in an independent set, since that does not increases $\tau(H)$ and $\tau(M_s)=s$, hence in this case $b(H,s)=\alpha(H)$. This can also be easily seen by the construction $H(n,2s-2,s-1)=H(n,2s-1,s-1)$, which is obtained from $K_{s-1,n-s+1}$ by adding all the possible edges inside the part of order $s-1$. We can show that this construction does not only give the order of magnitude, but also an exact result if $n$ is sufficiently large.

\begin{proposition}\label{taus}
    Let $\tau(H)\le s-1$. Then $\ex(n,H,M_s)=\cN(H,H(n,2s-1,s-1)$ if $n$ is sufficiently large.
\end{proposition}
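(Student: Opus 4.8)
The plan is to combine the Berge--Tutte reduction described above with a leading-coefficient computation. First I would invoke that reduction: the maximum of $\cN(H,G)$ over $n$-vertex $M_s$-free graphs $G$ is attained by a graph in \emph{canonical form}, namely a set $A$ of $a$ vertices of degree $n-1$ together with vertex-disjoint cliques $G_1,\dots,G_m$ of odd orders $2b_1+1,\dots,2b_m+1$, subject to $a+\sum_i b_i\le s-1$. For fixed $s$ there are only $O_s(1)$ such graphs, and $H(n,2s-1,s-1)$ is exactly the one with $a=s-1$ (which forces $\sum_i b_i=0$, i.e.\ every $G_i$ is a single vertex and the non-$A$ part is an independent set $I$ of size $n-s+1$). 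So it suffices to show that, for large $n$, this candidate strictly dominates every other canonical graph.

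Then, writing $\tau=\tau(H)$ and $\alpha=\alpha(H)=|V(H)|-\tau$, I would compute the leading term in $n$ of $\cN(H,G')$ for a canonical $G'$ with parameter $a$. Let $I$ denote the union of the single-vertex components of $G'$, so $|I|=n-O_s(1)$, while $A$ together with the cliques of order at least $3$ spans only $O_s(1)$ vertices. In any copy of $H$ the preimage of $I$ is an independent set of $H$, hence has at most $\alpha$ vertices; since everything outside $I$ is bounded, copies using fewer than $\alpha$ vertices of $I$ contribute only $O(n^{\alpha-1})$. Thus the $n^{\alpha}$-term comes exactly from copies in which a \emph{maximum} independent set $S\subseteq V(H)$ is mapped into $I$ and $W:=V(H)\setminus S$ is mapped outside $I$.

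The crucial observation is that for such $S$ the cliques of order at least $3$ play no role. Indeed $W$ is a minimum vertex cover, so every $w\in W$ has a neighbour in $S$ (otherwise $W\setminus\{w\}$ would still cover every edge, contradicting minimality), whence $N_H(S)=W$. As the vertices of $I$ are adjacent in $G'$ only to $A$, every vertex of $W$ must be mapped into $A$; and since $A$ is a clique, every injection $W\to A$ is edge-preserving and the $S$--$W$ edges are automatically realized. Hence each maximum independent set contributes $|I|^{\underline{\alpha}}\cdot a^{\underline{\tau}}$ injective homomorphisms, where $x^{\underline{k}}=x(x-1)\cdots(x-k+1)$, giving
\[
\cN(H,G')=\frac{|\cI(H)|}{|\Aut(H)|}\,\frac{a!}{(a-\tau)!}\,n^{\alpha}+O(n^{\alpha-1}),
\]
where $\cI(H)$ is the set of maximum independent sets of $H$ (this requires $a\ge\tau$; otherwise the $n^{\alpha}$-term vanishes).

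Finally, the factor $a!/(a-\tau)!=a^{\underline{\tau}}$ is strictly increasing in $a$ whenever $\tau\ge 1$ (the case $\tau=0$, i.e.\ $H$ edgeless, being trivial since then $\cN(H,G)=\binom{n}{|V(H)|}$ for every $G$), while all the remaining factors are independent of $G'$. Therefore the leading coefficient is uniquely maximized at $a=s-1$, i.e.\ by $H(n,2s-1,s-1)$. Since there are only $O_s(1)$ canonical competitors and each one with $a<s-1$ has a strictly smaller leading coefficient, there is an $N_0=N_0(H,s)$ such that for all $n\ge N_0$ the canonical graph with the most copies of $H$ is $H(n,2s-1,s-1)$, proving the claim. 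The one delicate point, and the step I expect to be the main obstacle to state cleanly, is the identity $N_H(S)=W$ for maximum independent sets, since it is precisely what rules out any gain from trading universal vertices for larger cliques; once it is in hand, the monotonicity in $a$ and hence the whole comparison is immediate.
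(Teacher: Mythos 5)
Your proof is correct, but its logical structure is genuinely different from the paper's. The paper works directly with an arbitrary extremal graph and its Berge--Tutte partition: when $|A|<s-1$ it performs an exchange (delete the edges inside the components $G_i$ and promote $s-|A|-1$ vertices to universal ones), showing the loss is $O(n^{\alpha(H)-1})$ while the gain is $\Omega(n^{\alpha(H)})$, contradicting extremality; the boundary case where the smallest possible $A$ has order $s-1$ is then handled by a separate structural argument (an induction on $s$) showing no component contains an edge, whence $G\subseteq H(n,2s-1,s-1)$. You instead exploit the reduction, stated before the proposition, to the finite list of canonical graphs (universal set of size $a$ plus disjoint cliques), and compare all candidates at once through the exact asymptotic count $\cN(H,G')=\frac{|\cI(H)|}{|\Aut(H)|}\,a^{\underline{\tau}}\,n^{\alpha}+O(n^{\alpha-1})$. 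The key point that makes this formula exact --- that for a maximum independent set $S$ one has $N_H(S)=V(H)\setminus S$, so the whole minimum cover is forced into $A$ --- has no counterpart in the paper, which only needs the one-sided lower bound obtained by placing a cover inside $A'$. Your route avoids the paper's induction entirely (in canonical form, $a=s-1$ forces all singleton components by the Berge--Tutte budget $a+\sum_i b_i\le s-1$) and yields more refined information, namely the leading coefficient of every candidate and exactly where the hypothesis $\tau(H)\le s-1$ enters; the paper's route, in turn, gives structural information about every extremal graph rather than only the value of $\ex(n,H,M_s)$. One cosmetic slip: $a^{\underline{\tau}}$ is not strictly increasing on all of $\{0,\dots,s-1\}$ (it vanishes for $a<\tau$), but since it equals $0$ there and is strictly increasing for $a\ge\tau$, the unique maximizer over $a\le s-1$ is still $a=s-1$ because $s-1\ge\tau$, so your comparison and conclusion stand.
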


\begin{proof}
    Let $G$ be an $n$-vertex $M_s$-free graph that contains $\ex(n,H,M_s)$ copies of $H$ and consider a Berge-Tutte partition of $G$. Count first the number of copies of $H$ that contain an edge in some $G_i$. Observe that there are $O(1)$ such edges. Then there are $O(1)$ ways to pick some vertices from $A$ and some other vertices that are incident to edges inside some $G_j$. Finally, there are $O(n)$ ways to pick each other vertex from the components $G_j$. Observe that these are independent from each other and from any endpoint of the edge picked first, thus there are at most $\alpha(H)-1$ of them, hence there are $O(n^{\alpha(H)-1})$ such copies of $H$.

    If $|A|<s-1$, we delete each edge inside the sets $G_i$, and join $s-|A|-1$ vertices of $G_i$ to each other vertex. Let $A'$ be the union of $A$ and these $s-|A|-1$ vertices. This way we obtain $H(n,2s-2,s-1)$. We deleted $O(n^{\alpha(H)-1})$ copies of $H$. For each vertex $v$ of $A'\setminus A$, we can find copies of $H$ by picking $v$, $\tau(H)-1$ other vertices of $A'$ and $\alpha(H)$ vertices outside $A'$. This can be done $O(n^{\alpha(H)})$ ways, thus the number of copies of $H$ increases, a contradiction.

    Finally, if the smallest possible $A$ that can be chosen in a Berge-Tutte partition has order $s-1$, then there cannot be an edge inside any $G_i$. Indeed, we prove this by induction on $s$. For $s=2$, $G$ must be a star (plus some isolated vertices) and we can pick the center as $A$. For larger $s$, we use that $n$ is large enough. There is a vertex $u\in A$ that is incident to at least $2s-1$ vertices, as otherwise there are $O(1)$ non-isolated vertices in $G$, thus $G$ contains $O(1)$ copies of $H$. Let us delete $u$ from $G$. The resulting graph $G'$ does not contain $M_{s-1}$, as that could be extended to an $M_s$ in $G$ by the edge $uv$ for a vertex $v$ that is a neighbor of $u$ and is not in the copy of $M_{s-1}$. 
    
    Then by induction, either there is a Berge-Tutte partition of $G'$ with $A_0$ having order less than $s-2$, or a Berge-Tutte partition of $G'$ with $A_0$ having order $s-2$ and no edge in any of the components created by removing $A_0$, where $A_0$ denotes the set of vertices that cuts $G'$ to odd components in the definition of the Berge-Tutte partition. 
    
    In both cases we add $u$ to $A_0$. This way we obtain a Berge-Tutte partition of $G$, where the set $A''=A_0\cup \{u\}$ corresponds to $A$. We have that either $|A''|\le s-2$, or there are no edges outside $A''$, completing the proof that each $G_i$ is a singleton. Then $G$ is a subgraph of $H(n,2s-1,s-1)$, completing the proof.
\end{proof}

We also prove a simple statement in the case $b(H,s)=0$, i.e., $\ex(n,H,M_s)=O(1)$. 

\begin{proposition}
   \textbf{(i)} If every vertex of $H$ has degree at least $s$, then $\ex(n,H,M_s)=\cN(H,K_{2s-1})$ for $n\ge 2s-1$.

      \textbf{(ii)} If every vertex of $H$ has degree at least $s$, except one vertex which has degree $s-1$, then $\ex(n,H,M_s)=\max\{\cN(H(n,2s-1,s-1)),\cN(H,K_{2s-1})\}$ for $n\ge 2s-1$.
\end{proposition}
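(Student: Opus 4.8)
The plan is to prove both parts together, establishing the uniform upper bound $\cN(H,G)\le\cN(H,K_{2s-1})$ for every $n$-vertex $M_s$-free graph $G$; the displayed formulas then follow from easy constructions. For the lower bound I take $K_{2s-1}$ together with $n-(2s-1)$ isolated vertices (possible since $n\ge 2s-1$): its matching number is $s-1$, so it is $M_s$-free, and it contains $\cN(H,K_{2s-1})$ copies of $H$. In case \textbf{(ii)} I additionally note that $H(n,2s-1,s-1)$ is $M_s$-free — its exceptional set has order $s-1$ and the remaining components of its complement are singletons — which accounts for the other term of the maximum. Combining these lower bounds with the upper bound below yields the stated (maxima) equalities, so it remains to bound $\cN(H,G)$ from above.

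Fix an $M_s$-free graph $G$ and a Berge--Tutte partition with exceptional set $A$ and components $G_1,\dots,G_m$ of $G-A$, so that $|A|+\sum_i\frac{|V(G_i)|-1}{2}\le s-1$. Consider a copy of $H$ in $G$, given by an embedding $\phi$, and set $a=|\phi^{-1}(A)|$ and $c_i=|\phi^{-1}(G_i)|$. If $v\in V(H)$ has degree at least $s$ and $\phi(v)\in G_i$, then every neighbour of $v$ must map into $A\cup G_i$ (no edge leaves $G_i$ except through $A$), so $s\le\deg_H(v)\le a+c_i-1$, whence $|V(G_i)|\ge c_i\ge s+1-a\ge s+1-|A|$. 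The decisive observation is that $G$ has \emph{at most one} such large component: two components of order at least $s+1-|A|$ would contribute at least $2\cdot\frac{(s+1-|A|)-1}{2}=s-|A|$ to the Berge--Tutte sum, forcing $|A|+(s-|A|)\le s-1$, i.e.\ $s\le s-1$, which is absurd.

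Every copy of $H$ has at least $|V(H)|\ge s+1$ vertices while $|A|\le s-1$, so at least two vertices are mapped outside $A$, and at least one of these has degree at least $s$ (all vertices do in case \textbf{(i)}; all but the exceptional vertex $z$ do in case \textbf{(ii)}). That vertex lies in a component, which is therefore large; in particular a unique large component $G_1$ exists, and if $G$ had none then $G$ would contain no copy of $H$ and we would be done. I then claim the whole copy lies in $A\cup G_1$. Indeed, any degree-$\ge s$ vertex sent to a component forces that component to be large, hence to be $G_1$; the only way to meet a second component is to send $z$ there alone, but then the $s-1$ neighbours of $z$ all lie in $A$, so $|A|=s-1$, the entire budget is consumed by $A$, every $G_i$ is a singleton (of degree $s-1$ in $G$), and the at least $s$ vertices of $H$ of degree $\ge s$ would have to map into the $s-1$ vertices of $A$ — a contradiction. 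Consequently the copy is contained in $G[A\cup G_1]$, which spans $|A|+|V(G_1)|\le 2s-1-|A|\le 2s-1$ vertices (using $|V(G_1)|\le 2(s-1-|A|)+1$ from the budget) and is thus a subgraph of $K_{2s-1}$. Since every copy lies in this one set, $\cN(H,G)=\cN(H,G[A\cup G_1])\le\cN(H,K_{2s-1})$, completing the upper bound.

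I expect the localization step to be the crux: the content is that a copy of $H$ cannot be spread across several components, which rests on the budget inequality ruling out two large components, and in case \textbf{(ii)} on pinning down the degree-$(s-1)$ vertex $z$, the only vertex that could occupy a small component. Once every copy is confined to $A\cup G_1$ the conclusion is immediate, and the remaining ingredients — the matching-number checks for the two constructions and the arithmetic $|A|+|V(G_1)|\le 2s-1$ — are routine.
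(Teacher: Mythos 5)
Your proof is correct, and its core is the same as the paper's: fix a Berge--Tutte partition, observe that a vertex of $H$-degree at least $s$ embedded into a component $G_i$ forces $|A|+|V(G_i)|-1\ge s$, and use the budget $|A|+\sum_i(|V(G_i)|-1)/2\le s-1$ to rule out two such components. The genuine divergence is in part \textbf{(ii)}. The paper's proof mirrors the shape of the stated formula: either all copies of $H$ meet a single component, giving the bound $\cN(H,K_{2s-1})$, or two components contain vertices of copies, in which case it shows $|A|=s-1$ and the relevant components are singletons, so that the graph is (after harmless edge deletions) a subgraph of $H(n,2s-1,s-1)$, giving the bound $\cN(H,H(n,2s-1,s-1))$. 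You instead push that second case to a contradiction: once all components are singletons, the at least $|V(H)|-1\ge s$ vertices of $H$ of degree at least $s$ would have to embed injectively into $A$, which has only $s-1$ vertices. Hence you get the uniform bound $\cN(H,G)\le\cN(H,K_{2s-1})$ for every $M_s$-free $G$ in both parts, and the stated maximum follows since $H(n,2s-1,s-1)$ is $M_s$-free. This is slightly stronger than what the paper proves, and it exposes a redundancy in the paper's statement: in case \textbf{(ii)} one actually has $\cN(H,H(n,2s-1,s-1))=0$, because that graph has only $s-1$ vertices of degree at least $s$ (the set $A$; all other vertices have degree exactly $s-1$), while a copy of $H$ needs at least $s$ such vertices. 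So your route buys the cleaner conclusion $\ex(n,H,M_s)=\cN(H,K_{2s-1})$ in both cases, whereas the paper's case split stays closer to the form of the Liu--Zhang result it parallels. (One cosmetic slip: in your singleton analysis the component vertices have degree \emph{at most} $s-1$ in $G$, not exactly $s-1$; nothing in the argument depends on this.)
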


\begin{proof} Let us start by proving \textbf{(i)}.
    Observe that $b(H,s)=0$ since blowing up any vertex would result in an $M_s$ (and also because any vertex in a one-vertex $G_i$ has degree at most $s-1$). Let $G$ be an $n$-vertex $M_s$-free graph and consider a Berge-Tutte partition of $G$. We claim that there is at most one $G_i$ with vertices contained in some copy of $H$. Indeed, if $|V(G_1)|=a\le |V(G_2)|$, then vertices in $G_1$ have degree at most $a-1+|A|$. Therefore, $a-1+|A|\ge s$. On the other hand, by the definition of the Berge-Tutte partition, $|A|+a-1\le |A|+(|V(G_1)|-1)/2+(|V(G_2)|-1)/2\le s-1$, a contradiction. We obtained that each copy of $H$ is in $A\cup V(G_i)$ for some $i$, which has at most $2s-1$ vertices, completing the proof.

    The proof of \textbf{(ii)} goes similarly. Now it is possible that $G_1$ and $G_2$ both have vertices that are contained in some copies of $H$. However, then each vertex of $G_1$ has degree less than $s$, thus there is exactly one vertex in $G_1$. We claim that there is also exactly one vertex in $G_2$. Indeed, we have $|A|+(|V(G_2)|-1)/2\le s-1$ by the definition of the Berge-Tutte partition, and $|A|\ge s-1$ since the degree of the vertex in $G_1$ is at most $|A|$. But this means we can delete the edges inside components $G_i$ of order more than 1 and then the resulting graph is a subgraph of $H(n,2s-1,s-1)$, completing the proof.
\end{proof}


\vskip 0.3truecm

\textbf{Funding}: Research supported by the National Research, Development and Innovation Office - NKFIH under the grants FK 132060 and KKP-133819.

\vskip 0.3truecm



\begin{thebibliography}{99}


\bibitem{AS}
N. Alon, C. Shikhelman, Many $T$ copies in $H$-free graphs, \textit{Journal of Combinatorial Theory, Series B}, \textbf{121}, (2016), 146--172.



\bibitem{berg}  C.Berge, Sur le couplage maximum d’un graphe, \textit{C.R. Acad. Sci. Paris S\'er.I Math}, \textbf{247}, (1958), 258--259.



\bibitem{erd1} P. Erd\H os. Some recent results on extremal problems in graph theory, \textit{Theory of Graphs} (Internl. Symp. Rome), (1966), 118--123.

\bibitem{erd2} P. Erd\H os. On some new inequalities concerning extremal properties of graphs, in Theory of Graphs  (ed P. Erd\H os, G. Katona), Academic Press, New York, (1968), 77--81.



\bibitem{Er-Ga} P. Erd\H os,  T. Gallai.  On maximal paths and circuits of graphs. {\it Acta Math. Acad. Sci. Hungar.}  \textbf{10}, (1959) 337--356.

\bibitem{FSch}
R.J. Faudree, R.H. Schelp. Path Ramsey numbers in multicolorings. \textit{Journal of Combinatorial Theory, Series B}, \textbf{19}(2), (1975) 150--160.




\bibitem{fkv} Z. F\"uredi, A. Kostochka, J. Verstra\"ete. Stability in the Erd\H os–Gallai Theorems on cycles and paths. {\it Journal of Combinatorial Theory, Series B}, \textbf{121}, (2016) 197--228.

\bibitem{ger} D. Gerbner, Generalized Turán problems for small graphs, \textit{DMGT}, \textbf{43}(2), (2023), 549--572.


\bibitem{ger3} D. Gerbner, Some exact results for non-degenerate generalized Tur\'an problems. \textit{arXiv preprint}, (2022), arXiv:2209.03426.

\bibitem{ger2} D. Gerbner. Some stability and exact results in generalized Turán problems. \textit{Studia
Scientiarum Mathematicarum Hungarica}, \textbf{60}(1), (2023), 16-–26.

\bibitem{ger4} D. Gerbner, Generalized Tur\'an results for disjoint cliques. \textit{Discrete Mathematics}, \textbf{347}(6), (2024), 114021.













\bibitem{liuz} Y. Liu and L. Zhang. The maximum number of complete multipartite subgraphs
in graphs with given circumference or matching number. \textit{Discrete Mathematics},
\textbf{347}(1), (2024), 113734.


\bibitem{lyz} C. Lu, L.-T. Yuan, and P. Zhang. The maximum number of copies of $K_{r,s}$ in graphs
without long cycles or paths. \textit{The Electronic Journal of Combinatorics}, (2021), P4.







\bibitem{simi} M. Simonovits. Extremal graph problems with symmetrical extremal graphs. Additional chromatic conditions, \textit{Discrete Math.} \textbf{7}, (1974), 349--376.


\bibitem{T}
P. Tur\'an. Egy gr\'afelm\'eleti sz\'els\H o\'ert\'ekfeladatr\'ol. \textit{Mat. Fiz. Lapok}, \textbf{48}, (1941), 436--452.


\bibitem{wang} J. Wang, The shifting method and generalized Tur\'an number of matchings, \textit{European
J. Combin.} \textbf{85}, (2020), 103057.


\end{thebibliography}
\end{document}